\documentclass[12pt, reqno]{amsart}
\usepackage{geometry}                % See geometry.pdf to learn the layout options. There are lots.
\geometry{letterpaper}                   % ... or a4paper or a5paper or ... 
\usepackage{graphicx}
\usepackage{booktabs}
\usepackage{amssymb}
\usepackage{epstopdf}
\usepackage{hyperref}
\DeclareGraphicsRule{.tif}{png}{.png}{`convert #1 `dirname #1`/`basename #1 .tif`.png}

\makeatletter
\def\@setthanks{\vspace{-\baselineskip}\def\thanks##1{\@par##1\@addpunct.}\thankses}
\makeatother

%%%%%%%%% Theorems and proofs

\newtheorem{theorem}{Theorem}
\newtheorem{lemma}[theorem]{Lemma}
\newtheorem{definition}[theorem]{Definition}
\newtheorem{corollary}[theorem]{Corollary}

%%%%%%%%% I added
\newtheorem{assumption}{A}
%%%%%%%%

\title[On the GMFG equations]
{On the Graphon Mean Field Game Equations: Individual Agent Affine Dynamics and Mean Field Dependent Performance Functions}

\author{
Peter E. Caines}
%\thanks{Department of Electrical and Computer Engineering, McGill University, Montreal,  Canada. peterc@cim.mcgill.ca }
\author{
Daniel  Ho}
\author{Minyi Huang}
\author {
Jiamin Jian}
\author {
Qingshuo Song}
\thanks{
\noindent 
P. E. Caines is with the
Department of Electrical and Computer Engineering, McGill University, Montreal,  Canada. peterc@cim.mcgill.ca. \\
D. Ho is with the
Department of Mathematics, City
University of Hong Kong, Hong
Kong. madaniel@cityu.edu.hk.\\
M. Huang is with the
School of Mathematics and Statistics, Carleton University, Ottawa, ON,
Canada. mhuang@math.carleton.ca.\\
Q. Song and J. Jian are both with the Department of Mathematical Sciences, Worcester Polytechnic Institute, 
jjian2@wpi.edu,
qsong@wpi.edu.\\
The research of P.E. Caines, D. Ho, and Q. Song were 
supported in part  by the RGC of Hong Kong CityU (11201518).
The work of P. E. Caines was partially supported by AFOSR grant  FA9550-19-1-0138. 
The research work of M. Huang was supported by NSERC.
\\ We acknowledge the valuable comments from anonymous reviewers.
}

%\date{}                                           % Activate to display a given date or no date

\begin{document}
\maketitle
%\section{}
%\subsection{}
\begin{abstract}
This paper establishes unique solvability of a class of Graphon Mean Field Game equations. 
The special case of a constant graphon yields the result for the Mean Field Game equations.
\end{abstract}

\section{Introduction}
Mean Field Game (MFG) theory establishes  Nash equilibirum conditions for  large populations of asymptotically negligible non-cooperating agents via an analysis of the infinite limit population (Huang, Caines, and Malhame \cite{HCM03, HCM07, HCM06}; Lasry and Lions \cite{LL07}). 
The resulting PDEs (Partial Differential Equations) consist of a backward Hamilton-Jacobi-Bellman (HJB) equation and a forward Fokker-Planck-Kolmogorov (FPK) equation for each generic agent. These equations are linked by the state distribution of a generic agent which is called the mean field of the system.

The basic structure of standard MFG theory assumes a symmetry in the connections of the agents but not necessarily of their dynamics. However,  in the recent studies \cite{CM18, CH19, CH20} asymmetric graph connections in large population games are considered.  Large subpopulations (or clusters) of agents are placed at their particular nodes and communicate with the neighbouring  subpopulations  via the graph edges. The graphs are heterogeneous with the edges having not necessarily identical weights. In the network limit, a graphon gives the communication weights $g(\alpha, \beta)$, see for instance the introductions to each of \cite{CM18, CH19, CH20, GC18} for the Graphon MFG  (GMFG) framework  and 
 \cite{Lov12} for Graphon theory.
Along with \cite{CM18, CH19, CH20}, this paper proposes a new type of MFG PDE system associated to the Graphon Mean Field Game system. Our goal here is to establish the unique solvability of the GMFG equation in an appropriate function space.

The GMFG equations consist of a collection of parameterized Hamilton-Jacobi- Bellman equations, $HJB(\alpha), \alpha \in [0, 1]$, and 
a collection of parameterized Fokker-Planck-Kolmogorov equations, $FPK(\alpha)$ with $\alpha \in [0,1]$. 
The solution of a set of GMFG equations is a parameterized pair $(v, \mu)$, where 
$v[\alpha] = v(t, \alpha, x)$ solves the $HJB(\alpha)$ equation and 
$\mu[\alpha] = \mu(t, \alpha, x)$ solves the $FPK(\alpha)$ equation.
 The coupling of the system PDEs in this paper has the following features (see \cite{CH20} for a more general framework subject to different  hypotheses): 
\begin{itemize}
%\item $HJB(\alpha)$ is dependent to $\mu$ through its running cost (non-homogeneous term); 
\item 
$FPK(\alpha)$ depends upon $HJB(\alpha)$ through its first order coefficient $\nabla v$. 
\item $HJB(\alpha)$ depends upon $FPK(\alpha')$  for all $\alpha' \in [0,1]$ through the graphon $g$ acting on 
$\mu[\alpha']$;  this is the major difference from MFG.
\end{itemize}

The GMFG equations with a constant graphon reduce 
to the classical MFG system as a special case, and the original methods to establish solvability of the classical MFG equations are helpful in the present case.
%, see  \cite{Car13}, \cite{Ryz18}, \cite{GPV16}, and \cite{HCM06}, \cite{NC13}. 
 In %\cite{CH20}, 
 \cite{HCM06} and \cite{NC13},  
a Banach fixed point analysis is used depending on a 
%contraction argument 
%based on assumptions
contraction argument; this is based on assumptions on the Lipschitz continuity of the functions appearing in the MFG equations and their derivatives, and yields uniquenss as well as existence. 
This approach is used in the parallel study  \cite{CH20} of the solvability of the GMFG  equations. On the other hand, 
\cite{Car13} and \cite{Ryz18} carry out the existence analysis 
using the Schauder fixed point theorem based upon regularity assumptions and then obtain uniqueness via a monotonicity assumption on the running cost.

In this work, similar to the aforementioned analyses, we will establish the existence of solutions via the application of a fixed point theorem. 
Our existence proof adopts Schauder's argument on the fixed point theorem and is more closely relevant to \cite{GPV16}, \cite{Car13}, and \cite{Ryz18} in this sense.
Unlike \cite{GPV16} on the solvability in Sobolev space, our solvability is  to answer the existence in H\"older space along the lines of \cite{Car13} and \cite{Ryz18}.
Nevertheless, different from all aforementioned papers, our proof on the continuity of the gradient of the value function with respect to the coefficient functions relies on probabilistic estimates rather than the theory of viscosity solutions. The main advantage of our approach is that we can conclude the local Lipschitz continuity of the solution map, 
which is stronger than continuity and beneficial to the subsequent analysis of the GMFG.

Having said that, the major difficulty generalizing existence from the MFG case to the GMFG case 
is to obtain the regularity of the solution with respect to the variable $\alpha$, which is essential for 
the existence result by Schauder's fixed point theorem. To be more illustrative, 
for instance, to obtain a uniform first order estimate of
$|\nabla v(t, \alpha, x) - \nabla v(t, \alpha', x)|$ for the solution $v$ of the HJB equation, one has to compare the solutions from two different HJBs parameterized by $\alpha$ and $\alpha'$. 
This leads to a study of the sensitivity with respect to coefficient functions of corresponding PDEs. Therefore, 
the local Lipschitz continuity of the HJB solution map becomes essential for this procedure.

 The paper is organized as follows. Section \ref{s:ps} gives the problem set up.
 Section \ref{s:reg} presents the regularity of parabolic PDE  and applies this to the FPK. Section \ref{s:exist} presents the existence result and Section \ref{s:unique} treats uniqueness. Section \ref{s:conclusion} presents a summary and extensions of the main result. For better clarity, all notations used in this paper have been collected and explained in the Appendix Section \ref{s:appendix}.

\section{Problem setup}\label{s:ps}

Let $\mathbb T^d$ be a d-torus. $\mathcal P_1(\mathbb T^d)$ 
is  the Wasserstein space of probability measures on $\mathbb T^d$ satisfying 
$$\int_{\mathbb T^d} |x| d \mu(x) < \infty$$
endowed with 1-Wasserstein metric  $d_1(\cdot, \cdot)$ defined by
$$d_1(\mu, \nu) = \inf_{\pi \in \Pi(\mu, \nu)} \int_{\mathbb T^d \times \mathbb T^d} |x - y| d\pi(x, y),$$
where $\Pi(\mu, \nu)$ is the collection of all probability measures on $\mathbb T^d \times \mathbb T^d$ with its marginals agreeing with $\mu$ and $\nu$.

We consider the following large system of multi-agent problems.
A generic agent can be identified  
by its state pair $(\alpha, x) \in [0,1]\times \mathbb T^d$, 
where 
$\alpha$ is the cluster index 
and $x$ is a $\mathbb T^d$ valued state. The weights of connections between clusters are given by  a symmetric measurable function $g: [0,1]^2 \mapsto \mathbb R$, which is commonly referred to a graphon \cite{Lov12}.
The population density at the cluster $\alpha$ at time $t$ will be given by 
$\mu(t, \alpha) \in \mathcal P_1(\mathbb T^d)$.

\begin{proof}[Example]
Two examples of graphons are given in the following discussion, 
while the reader is  referred to \cite{Lov12} for the fundamental theory of this subject. 
A uniform graphon which corresponds to the limit of a sequence of Erd\"os-R\'enyi graphs with parameter $p$, $0 \leq p \leq 1$, is given by
\begin{equation}
\label{eq:graphon01}
g(\alpha, \alpha') = p, \ \forall \alpha, \alpha' \in [0,1]
\end{equation}
and the uniform attachment graph limit  has the graphon
\begin{equation}
\label{eq:graphon02}
g(\alpha, \alpha') = 1 - \max\{\alpha, \alpha'\}, \ \forall \alpha, \alpha' \in [0,1].
\end{equation}
\end{proof}

A running cost incurred to the generic agent of $(\alpha, x)$ with a feedback control exertion ${\bf a}: [0, T]\times [0,1] \times \mathbb T^d \mapsto  \mathbb R^d$ at time $t$ is given by
\begin{equation}
\label{eq:cost02}
\ell(\mu, g,  {\bf a}, t, \alpha, x)= \frac 1 2 |{\bf a}(t, \alpha, x)|^2 + \ell_1(\mu, g, t, \alpha, x)
\end{equation}
for some given function $\ell_1(\cdot, \cdot, \cdot, \cdot, \cdot)$. The following cost can be considered  as an example for $\ell_1$
\begin{equation}
\label{eq:cost1}
\ell_1(\mu, g, t, \alpha, x) = \int_0^1 \int_{\mathbb T^d} \ell_2(x, y) \mu(t, \alpha', dy)  g(\alpha, \alpha') \ d \alpha'
\end{equation}
for some 
$\ell_2: \mathbb T^d \times \mathbb T^d
\mapsto \mathbb R$.

%\end{proof}

Let $b: [0, T]\times [0,1] \times \mathbb T^d \mapsto \mathbb R$ and
$m_0: [0, 1]\times \mathbb T^d \mapsto \mathbb R^+$ be two given smooth enough functions. 
By $\nabla b$, we denote the gradient of $b$ on the domain $\mathbb T^d$, which is mapping  
$[0, T]\times [0,1] \times \mathbb T^d \mapsto \mathbb R^d$.
Finding a solution of the GMFG equations consists of solving for the unknown triples $(v, {\bf a}^*, \mu)$:
\begin{itemize}
\item
 the value function $v:    [0, T] \times [0, 1] \times \mathbb T^d \mapsto \mathbb R$, 
 \item
optimal control ${\bf a}^*:  [0, T] \times [0, 1] \times \mathbb T^d \mapsto \mathbb R^d$, 
\item and
the density $\mu:  [0, T] \times [0, 1] \times \mathbb T^d \mapsto \mathbb R^+$,
\end{itemize}
satisfying the $\alpha$ parameterized family
\begin{equation}
\label{eq:gmfg01}
\left\{
\begin{array}
{ll}
\displaystyle
\partial_t v  + 
(\nabla b +  {\bf a}^*)  \cdot
\nabla v
+ \frac 1 2 \Delta v 
+ \ell(\mu, g,  {\bf a}^*  ) = 0 \\
\displaystyle
{\bf a}^*(t, \alpha, x) = 
\arg \min_{a \in \mathbb R^d} 
\{  a \cdot \nabla v (t, \alpha, x) + \frac 1 2 |a|^2  \} \\
\displaystyle
\partial_t \mu = - \text{div}_x ( (\nabla b+  {\bf a}^*)  \mu ) + \frac 1 2 \Delta \mu \\
\displaystyle
v(T, \alpha, x) = 0, \quad \mu(0, \alpha,  x) = m_0(\alpha, x).
\end{array}
\right.
\end{equation}
In the first and third equation of \eqref{eq:gmfg01}, each term is a function of $(t, \alpha, x)$ without further specification. In particular, 
the $\ell(\mu, g,  {\bf a}^*  ) $ shall be understood as a mapping
$$(t, \alpha, x) \mapsto \ell(\mu, g,  {\bf a}^*  )(t, \alpha, x) :=  \ell(\mu, g,  {\bf a}^*, t, \alpha, x).$$

Our goal in this paper is to establish existence, uniqueness for the solution of \eqref{eq:gmfg01} in an appropriate solution space. 
We close this section with a brief illustration of the probabilistic formulation on the GMFG for the motivational purpose.
A generic player in GMFG is identified by a pair $(\alpha, x)\in [0,1] \times \mathbb T^d$, where $\alpha$ is geographical information and $x$ is a state.
The population density at index $\alpha$ at time $t$ is denoted by $\mu(t, \alpha) \in \mathcal P_1(\mathbb T^d)$
and the relation between two generic players in $\alpha$ and $\alpha'$ is given by a graphon $g(\alpha, \alpha')$.  
Given a population density $(t, \alpha) \mapsto \mu(t, \alpha)$ and a graphon $(\alpha, \alpha') \mapsto g(\alpha, \alpha')$, 
a generic player exerts its optimal strategy of the following stochastic control problem described below.
State evolution of the generic player at $\alpha$ follows a controlled stochastic differential equation:
\begin{equation}
\label{eq:Xt}
X_t^\alpha = X_0^\alpha + \int_0^t (\nabla b(s, \alpha, X_s^\alpha) + {\bf a}(s, \alpha,  X_s^\alpha)) ds + W_t^\alpha, %\ X_0 \sim m_0(\alpha),
\end{equation}
where the drift is formed by a control process $\bf a$ and a conservative vector field $\nabla b$,  
$W^\alpha$ is a Brownian motion in a filtered probability space independent to $W^\beta$ for any $\beta \neq \alpha$,
and $X_0^\alpha$ is an initial random variable with a given distribution $m_0(\alpha)$.
In the above, the left hand side is understood as the coset of $\mathbb Z^d$ that contains the right hand side by a mapping $\pi(x) = x + \mathbb Z^d$.
We use $X^\alpha [{\bf a}]$ to denote the process with the dependence on ${\bf a}$.
The objective of the generic player at $\alpha$ with a given population density flow $\mu$ is to minimize the total cost incurred during $[0, T]$ of the form
$$J^\alpha( {\bf a}, \mu) = \mathbb E \Big[ \int_0^T \ell(\mu, g, {\bf a}, t, \alpha, X_t^\alpha [{\bf a}]) dt \Big]$$
over a reasonably rich enough control space of ${\bf a}$. Note that the optimal strategy ${\bf a}^*$ depends on $\mu$. 
Given an initial distribution $m_0$, the goal of the GMFG is  to find the Nash equilibrium $\mu^*$ and the corresponding ${\bf a}^*$, i.e.
the pair $(\mu^*, {\bf a}^*)$ satisfies
$$J^\alpha ({\bf a}^*, \mu^*) \le J^\alpha ({\bf a}, \mu^*), \forall {\bf a} \hbox{ and } \mu^*(t, \alpha) \sim X^\alpha_t[{\bf a}^*], \forall (t, \alpha).$$
Indeed, the above formulation poses a class of mean field game problems indexed by $\alpha\in [0,1]$ and couplings between mean field games are imposed by the running cost $\ell$ via graphon $g$. 
For more detailed discussion and various applications are referred to  \cite{CM18, CH19, CH20}.

\section{Some regularity results}\label{s:reg}
We are going to present sensitivity results of the parabolic PDE and FPK equations with respect to their coefficients separately, which eventually
serve for the proof of fixed point theorem as key elements. 
Throughout the paper, we will use $\Psi(\cdot)$ in various places as a generic positive function increasing with respect to its variables.
Morevoer, all function spaces and relevant norms are sorted out in Section \ref{s:appendix}.
\subsection{Parabolic equations}
Consider the equation
\begin{equation}
\label{eq:ppde01}
\left\{
\begin{array}
{ll}
\partial_t u = \frac 1 2 \Delta u - c u + f, \hbox{ on } (0, T)\times \mathbb T^d
\\
u(0, x) = 0, \hbox{ on } x\in \mathbb T^d.
\end{array}
\right.
\end{equation}
We will denote the solution map by $u = u[ c, f]$ whenever it is necessary to emphasize its dependence on the coefficient functions.
\subsubsection{Preliminaries on solvability}
If the coefficients $ c$ and $f$ are H\"older in both variables $(t, x)$, then
there exists a unique classical solution. Recall that $\Psi(\cdot)$ is a generic function mentioned in the first paragraph of Section 3.
\begin{lemma}
\label{l:ppde01}
If $c, f\in C^{\delta/2, \delta} ([0, T] \times \mathbb T^d)$ holds for some $\delta\in (0,1)$, then there exists unique solution
$u \in C^{1+\delta/2, 2+\delta}  ([0, T] \times \mathbb T^d)$ of  \eqref{eq:ppde01} satisfying
$$|u|_{1+\delta/2, 2+\delta} \le \Psi(|c|_{\delta/2, \delta}, |f|_{\delta/2, \delta}).$$
Moreover, 
$v(t, x) := u(T-t, x)$ has a probabilistic representation $v[c, f]$ of the form
\begin{equation}
\label{eq:G02}
v(t, x) = v[c, f] (t, x) := 
\mathbb E \Big[ \int_t^{T} \exp\{- \int_t^{s} c(r, X^{t,x}(r)) dr\} f(s, X^{t,x}(s) )ds\Big], 
\end{equation}
where 
\begin{equation}
\label{eq:X01}
X^{t, x} (s)= x  + W(s) -W(t)
\end{equation}
for some Brownian motion $W$.
\end{lemma}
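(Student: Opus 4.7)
The plan is to establish existence, uniqueness, and the Schauder bound by appealing to the classical theory of linear uniformly parabolic equations with H\"older coefficients, and then to verify the Feynman-Kac representation by applying It\^o's formula to the time-reversed equation.

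For the first part, I would view $c$ and $f$ as $\mathbb Z^d$-periodic functions on $\mathbb R^d$ so that the interior Schauder theory for linear parabolic equations in H\"older spaces (Ladyzhenskaya-Solonnikov-Ural'tseva, or the treatment in Krylov) applies directly: the operator $L = \frac 1 2 \Delta - c$ is uniformly parabolic, the coefficients are H\"older with the correct parabolic scaling, and periodicity together with uniqueness ensures that the $\mathbb R^d$-solution descends to a classical $C^{1+\delta/2, 2+\delta}$ solution on $[0,T]\times \mathbb T^d$. The standard Schauder estimate takes the form
$$|u|_{1+\delta/2,\, 2+\delta} \le C\bigl(|f|_{\delta/2,\delta} + |c|_{\delta/2,\delta}\,|u|_{0,0}\bigr),$$
so it remains only to control $|u|_{0,0}$. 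This is supplied by the maximum principle (or equivalently by the yet-to-be-verified probabilistic representation), which gives $|u|_{0,0} \le T e^{T|c|_{0,0}} |f|_{0,0}$. Substituting produces a bound of the stated form $\Psi(|c|_{\delta/2,\delta}, |f|_{\delta/2,\delta})$. Uniqueness follows from the maximum principle applied to the difference of two candidate classical solutions.

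For the probabilistic representation, set $v(t,x) := u(T-t,x)$, so that $v$ solves a backward PDE with the time-reversed coefficients $\tilde c(t,x) = c(T-t,x)$, $\tilde f(t,x) = f(T-t,x)$:
$$\partial_t v + \frac 1 2 \Delta v - \tilde c\, v + \tilde f = 0, \qquad v(T,\cdot) = 0.$$
Let $X^{t,x}_s = x + W_s - W_t$, interpreted modulo $\mathbb Z^d$ so that it lives on $\mathbb T^d$, and set $\Gamma_{t,s} := \exp\bigl(-\int_t^s \tilde c(r, X^{t,x}_r)\, dr\bigr)$. Apply It\^o's formula to
$$N_s := \Gamma_{t,s}\, v(s, X^{t,x}_s) + \int_t^s \Gamma_{t,r}\, \tilde f(r, X^{t,x}_r)\, dr.$$
The backward PDE forces the finite-variation part of $dN_s$ to vanish, so $N$ is a local martingale; boundedness of $v$ and of $\Gamma_{t,\cdot}$ (from $\tilde c \in L^\infty$ and compactness of $\mathbb T^d$) promotes it to a true martingale. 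Taking expectations of $N_T = N_t$ and using $v(T,\cdot)=0$ produces the claimed representation \eqref{eq:G02}.

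The main obstacle is essentially bookkeeping: one has to track the time-reversal carefully so that the coefficients appearing in the It\^o calculation match what the lemma records on the right-hand side of \eqref{eq:G02} (which I read as stating the formula in the convention that absorbs this reversal into the labels $c,f$). The parabolic PDE step is entirely classical; the one slightly non-routine point is that the $\Psi$ bound must depend only on the H\"older norms of $c$ and $f$, which requires coupling the Schauder estimate with the maximum-principle $L^\infty$ bound rather than using either alone.
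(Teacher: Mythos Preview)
Your proposal is correct and follows essentially the same route as the paper: the paper's proof consists of nothing more than citations to the classical parabolic Schauder theory (Krylov, Lady\v{z}enskaja--Solonnikov--Ural'tseva) for existence, uniqueness, and the H\"older estimate, and to the Feynman--Kac formula (Fleming--Soner) for the probabilistic representation. Your more detailed sketch---periodic lift to $\mathbb R^d$, Schauder estimate combined with a maximum-principle $L^\infty$ bound, and the It\^o-formula verification of Feynman--Kac---is exactly how one would unpack those references, and your observation about the time-reversal bookkeeping in \eqref{eq:G02} is a valid reading of the paper's convention.
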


\begin{proof}
The solvability and its H\"older estimate is from  Theorem 8.7.2 and 
Theorem 8.7.3 of \cite{Kry96}, Theorem IV.5.1 of \cite{LSU67}.
The probabilistic representation $v[c, f]$ is from Feynman-Kac formula, see \cite{FS06}.
\end{proof}

In the above, 
we remark that, \eqref{eq:X01} reads by $X^{t, x} (s)= \pi(x  + W(s) -W(t))$, where $\pi$ is the generic mapping $\mathbb R^d \mapsto \mathbb R^d/\mathbb Z^d$.
Later we also need to use the following definition of weak solution, see \cite{Eva98}.
\begin{definition} \label{d:wkpde}
A function $u \in L^2 ([0, T], H^1(\mathbb T^d))$ 
is a weak solution of \eqref{eq:ppde01} if 
$u$ satisfies
\begin{equation}
\label{eq:ppde03}
\left\{
\begin{array}
{ll}
\int_{\mathbb T^d} \phi (- \partial_t u 
 - c u + f) dx
= \frac 1 2  \int_{\mathbb T^d}  \nabla \phi \cdot \nabla u dx, \ \forall \phi \in 
H^1(\mathbb T^d)
\\
u(0, x) = 0, \hbox{ on } x\in \mathbb T^d.
\end{array}
\right.
\end{equation}
\end{definition}

We have the following uniqueness with the same assumptions as in Lemma \ref{l:ppde01}.
\begin{lemma}
\label{l:ppde03}
If $c, f\in C^{\delta/2, \delta} ([0, T] \times \mathbb T^d)$ 
holds for some $\delta\in (0,1)$, then there exists unique 
weak solution of\eqref{eq:ppde01} in $L^2 ([0, T], H^1(\mathbb T^d))$.
\end{lemma}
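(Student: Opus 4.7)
My plan is to handle existence and uniqueness separately, with existence being essentially free from Lemma \ref{l:ppde01} and uniqueness reducing to a standard energy estimate.

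For existence: the classical solution $u \in C^{1+\delta/2, 2+\delta}([0,T]\times \mathbb T^d)$ produced by Lemma \ref{l:ppde01} sits in $L^2([0,T], H^1(\mathbb T^d))$ because $\mathbb T^d$ is compact and $u$ is $C^2$ in space. Multiplying \eqref{eq:ppde01} by any test function $\phi \in H^1(\mathbb T^d)$, integrating over $\mathbb T^d$, and using integration by parts on the $\Delta u$ term yields the weak identity in Definition \ref{d:wkpde}. This gives existence directly.

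For uniqueness, I would exploit linearity. Let $u_1, u_2$ be two weak solutions and set $w = u_1 - u_2 \in L^2([0,T], H^1(\mathbb T^d))$. Then $w$ satisfies the weak form with $f=0$ and $w(0,\cdot)=0$. The first task is to upgrade the temporal regularity: rewriting the weak form as
$$\langle \partial_t w, \phi \rangle_{H^{-1}, H^1} = -\tfrac{1}{2}\int_{\mathbb T^d} \nabla \phi \cdot \nabla w \, dx - \int_{\mathbb T^d} c w \phi \, dx,$$
the right-hand side is a bounded linear functional on $H^1(\mathbb T^d)$ uniformly in $t$, yielding $\partial_t w \in L^2([0,T], H^{-1}(\mathbb T^d))$. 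Invoking the standard Lions--Magenes trace/embedding theorem (Evans, Sec.\ 5.9), $w \in C([0,T], L^2(\mathbb T^d))$ and the identity $\tfrac{d}{dt}\|w(t)\|_{L^2}^2 = 2\langle \partial_t w, w\rangle$ holds a.e.\ $t$.

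Now I would take $\phi = w(t,\cdot)$ as the test function (admissible because $w(t,\cdot) \in H^1$ for a.e.\ $t$) to obtain
$$\tfrac{1}{2}\tfrac{d}{dt}\|w(t)\|_{L^2}^2 = -\tfrac{1}{2}\|\nabla w(t)\|_{L^2}^2 - \int_{\mathbb T^d} c\, w^2 \, dx \le |c|_\infty \|w(t)\|_{L^2}^2.$$
Since $w(0)=0$, Gronwall's inequality gives $\|w(t)\|_{L^2}^2 \equiv 0$, hence $w \equiv 0$ in $L^2([0,T], H^1)$. The main delicate point is the Lions--Magenes step that justifies pairing $w$ against itself and produces the energy identity; once that is in hand the H\"older assumption on $c$ is used only through its sup-norm bound in Gronwall, so the argument is otherwise routine.
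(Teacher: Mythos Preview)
Your proposal is correct and follows essentially the same approach as the paper. The paper handles existence identically (classical solution from Lemma~\ref{l:ppde01} plus compactness of $[0,T]\times\mathbb T^d$), and for uniqueness it simply invokes Theorem~7.4 of Evans under the conditions $c\in L^\infty$, $f\in L^2$; your energy-estimate argument with the Lions--Magenes step and Gronwall is precisely the content of that theorem spelled out, so the two proofs differ only in that yours is self-contained while the paper's is a citation.
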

\begin{proof}
By Lemma \ref{l:ppde01}, there exists a classical solution $u$. Together with
the compactness of the domain, it yields $u \in L^2  ([0, T], H^1(\mathbb T^d))$.
By Theorem 7.4 of \cite{Eva98}, uniqueness in $L^2 ([0, T], H^1(\mathbb T^d))$ holds if $c \in L^\infty$ and $f\in L^2$, and this is valid, since all coefficients are 
continuous on the compact domain.
\end{proof}

\subsubsection{First order regularity and sensitivity of the solution map}
Although Lemma \ref{l:ppde01} has an estimation on $|u|_{1, 2}$, 
it is controlled by an upper bound relevant to the H\"older norm of coefficients in the $t$ variable, 
which is not desirable, see Section \ref{s:remarks} for further remarks. Next, we will develop an upper bound 
independent of  $t$-H\"older norm of the coefficients.
To proceed, we define a linear operator
\begin{equation}
\label{eq:L01}
L u = \partial_t u - \frac 1 2\Delta u.
\end{equation}
The first result is on an estimate of  $|u|_0 = \sup_{[0, T] \times {\mathbb T}^d} |u(t, x)|$. 
\begin{lemma}
\label{l:reg01}
If $c, f\in C^{\delta/2, \delta} ([0, T] \times {\mathbb T}^d)$, then $u$ of \eqref{eq:ppde01} satisfies  $|u|_0 \le e^{|c|_0T} |f|_0 T$.
\end{lemma}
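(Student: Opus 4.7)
The plan is to invoke the Feynman--Kac representation supplied by Lemma \ref{l:ppde01}. Setting $v(t,x) = u(T-t,x)$, we have the explicit formula
$$
u(T-t, x) \;=\; \mathbb{E}\Big[\int_t^T \exp\Big\{-\int_t^s c(r, X^{t,x}(r))\, dr\Big\}\, f(s, X^{t,x}(s))\, ds\Big],
$$
where $X^{t,x}$ is the Brownian motion starting at $(t,x)$ as in \eqref{eq:X01}.

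From here the estimate is essentially routine. First, I would bring the absolute value inside the expectation and apply the triangle inequality to the $s$-integral. Second, I would use the trivial majorization
$$
\Big|\exp\Big\{-\int_t^s c(r, X^{t,x}(r))\, dr\Big\}\Big| \;\le\; \exp\{|c|_0 (s-t)\} \;\le\; e^{|c|_0 T},
$$
which just uses that $|c|_0$ is the uniform sup-norm of $c$ over $[0,T]\times\mathbb{T}^d$. Third, I would replace $|f(s, X^{t,x}(s))|$ by $|f|_0$, after which the $s$-integral over $[t,T]$ contributes a factor of at most $T$. Putting these pieces together gives $|u(\tau,x)| \le e^{|c|_0 T}|f|_0 T$ uniformly in $(\tau,x)\in[0,T]\times\mathbb{T}^d$, which is the claim.

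There is no genuine obstacle here: the bound is essentially a one-line consequence of Feynman--Kac plus trivial majorization. What is worth emphasizing is that the argument deliberately avoids any use of the H\"older seminorms of $c$ and $f$ in the time variable, which is precisely the improvement over the bound in Lemma \ref{l:ppde01} that the authors flagged in the paragraph preceding the statement. For completeness, a purely PDE-based alternative would be to take the $x$-independent candidate $w(t) = e^{|c|_0 t}|f|_0 t$, verify $L w + c w \ge |f|$ with $w(0)=0$, and invoke the parabolic maximum principle applied to $\pm u - w$; but the probabilistic route is the more natural one given that the paper has already committed to probabilistic, rather than viscosity-solution, techniques for its sensitivity analysis.
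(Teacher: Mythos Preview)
Your proof is correct but takes a different route from the paper. The paper proceeds purely by comparison principle: it constructs explicit time-dependent barriers $u_1(t)=|f|_0 t$ (when $c\equiv 0$) and $u_2(t)=|f|_0(e^{|c|_0 t}-1)/|c|_0$ (when $c\not\equiv 0$), checks that $(L+c)u_i \ge f$ with zero initial data, and notes that both barriers are bounded above by $e^{|c|_0 T}|f|_0 T$. You instead read the bound off the Feynman--Kac representation \eqref{eq:G02} already established in Lemma~\ref{l:ppde01}, which makes the estimate a one-liner and avoids the $c=0$ versus $c\neq 0$ case split. Your approach is arguably the more natural one given the paper's overall probabilistic strategy (indeed the very next lemma is proved exactly this way), while the paper's barrier argument is self-contained and does not lean on the representation formula. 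Amusingly, the PDE alternative you sketch at the end---the single barrier $w(t)=e^{|c|_0 t}|f|_0 t$---is essentially the paper's method, and your barrier is even a bit cleaner since it handles both cases at once.
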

\begin{proof}
If $c = 0$,  then with $u_1 =  |f|_0 t$,
$$L u_1 - f = |f|_0 - f\ge 0.$$
If $c\neq 0$, then with $u_2 = \frac{|f|_0 (e^{|c|_0 t} -1)}{|c|_0}$,
$$
\begin{array}
{ll}
(L +c) u_2 
& \displaystyle = |f|_0 e^{|c|_0t} \Big(1 + \frac{c}{|c|_0}\Big) - \frac{c}{|c|_0} |f|_{0}
\\
& \displaystyle
=
 |f|_0 (e^{|c|_0t}-1) \Big(1 + \frac{c}{|c|_0}\Big) + |f|_{0}
 \\ & \displaystyle
\ge f.
\end{array}
$$
Note that both $u_1$ and $u_2$ are no greater than $e^{|c|_0T} |f|_0 T$, and finally the comparison principle yields the result.
\end{proof}

Next we will have the first order estimate independent to the H\"older norm in $t$ of the coefficients. It also gives sensitivity of the solution map with respect to the coefficients.
\begin{lemma}
\label{l:lip01}
Let  $c, f$ be in $C^{\delta,1}([0, T]\times \mathbb T^d)$ for some  $\delta \in (0,1)$. 
Then the solution $u$ of \eqref{eq:ppde01} belongs to 
$C^{1, 2} ([0, T]\times \mathbb T^d)$ with 
$$|u|_{0,1} \le \Psi(|c|_{0,1} + |f|_{0,1}).$$
Furthermore, the solution map $u = u[c, f]$ satisfies
$$|u[c_1, f_1]  - u[c_2, f_2]  |_0 \le \Psi(K)  (|c_1- c_2|_0 + |f_1-f_2|_0)
$$
for
$ K:=|c_1|_{0}+ |c_2|_{0}+ |f_1|_{0}+ |f_2|_{0}.$
\end{lemma}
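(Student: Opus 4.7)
The plan is to exploit the probabilistic representation \eqref{eq:G02} from Lemma \ref{l:ppde01}, since the diffusion $X^{t,x}(s) = x + W(s) - W(t)$ depends on the initial state $x$ in the simplest possible way ($\nabla_x X^{t,x}(s) = I$), which makes gradient estimates very transparent. First I would note that since any $C^{\delta,1}$ function is automatically in $C^{\delta'/2, \delta'}$ for a suitable $\delta' \in (0, \min(2\delta, 1))$, Lemma \ref{l:ppde01} already yields the existence of a $C^{1+\delta'/2, 2+\delta'}$ solution, so the $C^{1,2}$ regularity is free. The content of the lemma is therefore the two quantitative estimates, both of which I would derive from \eqref{eq:G02}.

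For the first estimate, set $v(t,x) = u(T-t,x)$ and differentiate \eqref{eq:G02} in $x$. Since $c$ and $f$ are Lipschitz in the spatial variable (a.e. differentiable with $|\nabla c|_0 \le |c|_{0,1}$ and $|\nabla f|_0 \le |f|_{0,1}$), differentiation under the expectation gives
\begin{equation*}
\nabla v(t,x) = \mathbb{E}\bigg[\int_t^T e^{-\int_t^s c(r, X^{t,x}(r))\, dr}\Big(\nabla f(s, X^{t,x}(s)) - f(s, X^{t,x}(s)) \int_t^s \nabla c(r, X^{t,x}(r))\, dr\Big)\, ds\bigg].
\end{equation*}
Bounding each factor in sup norm and combining with the $|u|_0$ estimate of Lemma \ref{l:reg01} produces a bound of the form $|v|_{0,1} \le \Psi(|c|_{0,1} + |f|_{0,1})$, which transfers to $u$.

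For the Lipschitz dependence on coefficients, I would write
\begin{equation*}
v[c_1,f_1](t,x) - v[c_2,f_2](t,x) = \mathbb{E}\bigg[\int_t^T \Big(e^{-\int_t^s c_1(r,X)\,dr} f_1(s,X) - e^{-\int_t^s c_2(r,X)\,dr} f_2(s,X)\Big)\, ds\bigg]
\end{equation*}
and split the integrand as $\bigl(e^{-\int_t^s c_1} - e^{-\int_t^s c_2}\bigr) f_1 + e^{-\int_t^s c_2}\,(f_1 - f_2)$. Using the elementary inequality $|e^{-A} - e^{-B}| \le e^{\max(|A|,|B|)} |A-B|$ with $A,B = \int_t^s c_i(r,X)\,dr$, the first piece is controlled by $e^{KT} T |c_1 - c_2|_0 \cdot |f_1|_0$, and the second by $e^{KT} |f_1-f_2|_0 \cdot T$. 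Absorbing these into a function $\Psi(K)$ yields the claim.

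The main technical wrinkle I expect is the justification of differentiation under the expectation when $c,f$ are only Lipschitz rather than $C^1$ in $x$. I would handle this by a standard mollification argument: approximate $c,f$ by smooth functions $c_n, f_n$ with $|c_n|_{0,1}, |f_n|_{0,1}$ controlled by the original norms, obtain the gradient formula and estimate for the smooth problem where everything is classical, and then pass to the limit using dominated convergence and the Lipschitz sensitivity estimate itself (which at the smooth level gives uniform convergence of $v[c_n, f_n]$ to $v[c, f]$). Everything else is routine sup-norm bookkeeping.
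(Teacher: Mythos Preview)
Your approach is essentially the paper's: both use the Feynman--Kac representation \eqref{eq:G02}, and your splitting for the coefficient-sensitivity estimate is exactly what the paper does (it treats the $f$-variation and the $c$-variation separately and bounds $|e^{-A}-e^{-B}|$ the same way). For the $|u|_{0,1}$ bound the paper compares $v(t,x_1)$ with $v(t,x_2)$ directly via $X^{t,x_1}(s)-X^{t,x_2}(s)=x_1-x_2$ and the mean value theorem rather than differentiating under the expectation, but this is a cosmetic difference; note also that your mollification worry is unnecessary, since membership in $C^{\delta,1}$ already entails that $\nabla_x c$ and $\nabla_x f$ exist and are bounded.
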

\begin{proof}
$u$ of \eqref{eq:ppde01} can be written by $u (t, x) = v[c, f](T - t, x)$ with its probabilistic representation of \eqref{eq:G02}.
By setting $X^i: = X^{t, x_i}$ of \eqref{eq:X01}, we have
$$X^1_s - X^2_s = x_1 - x_2, \ \forall s\ge t.$$  
If we define
$$\Lambda_s^i = e^{-\int_t^s c(r, X^i(r)) dr},$$
then
$$v[c, f] (t, x_i) = \mathbb E \Big[ \int_t^T \Lambda^i_s f(s, X^i(s)) ds \Big ].$$
%For simplicity, we set $K = \max\{ |c|_{0,1}, |f|_{0,1}\}$. 
We first note that, by mean value theorem 
$$
\Big|\int_t^s c(r, X^1(r) )dr - \int_t^s c(r, X^2(r)) dr \Big| \le T |c|_{0,1}  |x_1 - x_2|. 
$$
Once again by mean value theorem and the fact of $|-\int_t^s c(r, X^i(r)) dr| \le T |c|_0$, we obtain 
$$
\begin{array}
{ll}
|\Lambda^1_s - \Lambda^2_s|  & \le  e^{T |c|_0} \Big|\int_t^s c(r, X^1(r)) dr - \int_t^s c(r, X^2(r) )dr \Big| \\
& \le  \Psi(|c|_{0,1})  |x_1 - x_2|
\end{array}
$$
with probability one for $\Psi = T |c|_{0,1} e^{T|c|_0}$.
Therefore, we have
$$
\begin{array}
{ll}
 \displaystyle
|v[ c, f] (t, x_1) - v[ c, f] (t, x_2) |
\le \mathbb E \int_t^T |  \Lambda^1_s f(s, X^1(s))  -  \Lambda^2_s f(s, X^2(s)) | ds
\\ \displaystyle \quad \quad \quad
\le \mathbb E \int_t^T ( |\Lambda^1|_0 |f(s, X^1(s)) -  f(s, X^2(s))| + 
|f|_0 | \Lambda^1_s -  \Lambda^2_s| ) ds
\\ \displaystyle \quad \quad \quad
\le \mathbb E \int_t^T \Psi(|c|_0) |\nabla f|_0 |X^1(s) -  X^2(s)| ds 
+ 
\mathbb E \int_t^T  |f|_0 | \Lambda^1_s -  \Lambda^2_s| ds
\\ \displaystyle \quad \quad \quad
\le 
T\Psi(|c|_0) |\nabla f|_0   |x_1 -x_2|
+ 
T |f|_0 | \Psi(|c|_{0,1})  |x_1 - x_2|
\\ \displaystyle \quad \quad \quad
\le \Psi(|c|_{0,1}+|f|_{0,1}) |x_1 - x_2|
\end{array}
$$
This implies that $|\nabla v|_0 \le  \Psi(|c|_{0,1}+|f|_{0,1})$. Together with Lemma \ref{l:reg01}, we conclude that 
\begin{equation}
\label{eq:lem4-1}
|u|_{0,1} \le  \Psi(|c|_{0,1}+|f|_{0,1}).
\end{equation}

Next, we estimate $|u[ c, f_1] - u[ c, f_2] |_0$. For any $(t,x)$, we set
$\Lambda_s = e^{-\int_t^s c(r, X(r)) dr},$ and note that
$$
\begin{array}
{ll}
|v[ c, f_1] (t,x)- v[ c, f_2] (t,x) | & \le \displaystyle
\mathbb E \int_t^T |\Lambda_s f_1(s, X_s) - \Lambda_s f_2(s, X_s)| ds 
\\ 
& \displaystyle \le |f_1 - f_2|_0 \mathbb E \int_t^T |\Lambda_s|  ds 
\\ 
& \displaystyle \le  T e^{T |c|_0}  |f_1 - f_2|_0.
\end{array}
$$
This concludes that 
\begin{equation}
\label{eq:lem4-2}
|u[ c, f_1] - u[ c, f_2] |_0 \le \Psi(|c|_0) |f_1 -f_2|_0.
\end{equation}
In the following, we estimate  $|u[ c_1, f] - u[ c_2, f] |_0$. By setting 
$\Lambda^i_s = e^{-\int_t^s c_i(r, X(r)) dr},$ we have
$$|\Lambda^1_s - \Lambda^2_s| 
\le e^{T(|c_1|_0 + |c_2|_0)} \int_t^s |c_1(r, X_r) - c_2(r, X_r)|dr
\le e^{T(|c_1|_0 + |c_2|_0)}  |c_1 - c_2|_0 T$$
with probability one. Therefore,
$$
\begin{array}
{ll}
|v[c_1, f] (t,x)- v[c_2, f] (t,x) | & \le \displaystyle
\mathbb E \int_t^T |\Lambda^1_s f(s, X_s) - \Lambda^2_s f(s, X_s)| ds 
\\ 
& \displaystyle \le |f|_0 \mathbb E \int_t^T |\Lambda^1_s - \Lambda^2_s|  ds 
\\ 
& \displaystyle \le  T^2 e^{T(|c_1|_0 + |c_2|_0)} |f|_0  |c_1 - c_2|_0.
\end{array}
$$
This implies that 
\begin{equation}
\label{eq:lem4-3}
|u[ c_1, f] - u[ c_2, f] |_0 \le \Psi(|c_1|_0 + |c_2|_0 + |f|_0) |c_1-c_2|_0.
\end{equation}
The conclusion yields from \eqref{eq:lem4-1},  \eqref{eq:lem4-2},  \eqref{eq:lem4-3}.
\end{proof}

\subsubsection{Second order regularity and first order sensitivity}

Next, we will see that under better regularity of $c$ and $f$ in $x$, 
we can improve regularity and sensitivity. 
Formally, if $u$ of \eqref{eq:ppde01} is smooth enough, one can take derivatives of the equation to conclude that $\bar u_j = \partial_j u$ 
is the solution of the following equation depending on  $c, f$ and $u$ 
of \eqref{eq:ppde01}.
\begin{equation}
\label{eq:ppde02}
\left\{
\begin{array}
{ll}
\partial_t \bar u_j = \frac 1 2 \Delta \bar u_j 
- c \bar u_j 
- u \partial_j c  + \partial_j f, 
\hbox{ on } (0, T)\times {\mathbb T}^d
\\
\bar u_j(0, x) = 0, \hbox{ on } x\in {\mathbb T}^d
\end{array}
\right.
\end{equation}
However, \eqref{eq:ppde02} is valid only if $u\in C^{1,3}$ is given a priori. 

\begin{lemma}
\label{l:ubar01}
If $ c, f\in C^{\delta,2}([0, T] \times {\mathbb T^d})$ for some $\delta \in (0,1)$, then
the solution $u$ of \eqref{eq:ppde01} is in $C^{1,3}([0, T] \times {\mathbb T^d})$ and 
$\bar u_j = \partial_j u$  is the unique solution of \eqref{eq:ppde02}.
\end{lemma}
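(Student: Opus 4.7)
My plan is to construct a candidate $\bar u_j$ by applying Lemma \ref{l:ppde01} to the linearized equation \eqref{eq:ppde02}, and then to identify this candidate with $\partial_j u$ via a mollification argument powered by the sensitivity estimate of Lemma \ref{l:lip01}. The regularity $u\in C^{1,3}$ then drops out of this identification.

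For the construction, the hypothesis $c \in C^{\delta, 2}$ is strictly stronger than $C^{\delta/2, \delta}$ (since $\delta/2 \le \delta$ and spatial $C^2 \subset C^\delta$ on the compact torus), so Lemma \ref{l:ppde01} already gives $u \in C^{1+\delta/2, 2+\delta}$ and in particular $u \in C^{\delta/2, \delta}$. Differentiating the hypothesis in $x$ places $\partial_j c, \partial_j f \in C^{\delta, 1} \subset C^{\delta/2, \delta}$, and the product rule then yields $g_j := -u\,\partial_j c + \partial_j f \in C^{\delta/2, \delta}$. A second application of Lemma \ref{l:ppde01}, now to \eqref{eq:ppde02} (whose zeroth-order coefficient is $c$ and whose source is $g_j$), produces a unique classical solution $\bar u_j \in C^{1+\delta/2, 2+\delta}$.

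For the identification, I would mollify $c$ and $f$ in both variables to obtain $C^\infty$ approximants $(c^\epsilon, f^\epsilon)$ converging uniformly to $(c, f)$ with uniformly bounded $C^{\delta, 2}$ norms and with $\partial_j c^\epsilon \to \partial_j c$, $\partial_j f^\epsilon \to \partial_j f$ uniformly. The corresponding solutions $u^\epsilon = u[c^\epsilon, f^\epsilon]$ of \eqref{eq:ppde01} are smooth by standard parabolic bootstrapping, so $\partial_j u^\epsilon$ classically satisfies \eqref{eq:ppde02} with data $(c^\epsilon, g_j^\epsilon)$ where $g_j^\epsilon := -u^\epsilon \partial_j c^\epsilon + \partial_j f^\epsilon$. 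Lemma \ref{l:lip01} gives $u^\epsilon \to u$ uniformly; combined with the uniform convergence of $\partial_j c^\epsilon, \partial_j f^\epsilon$ this yields $g_j^\epsilon \to g_j$ uniformly. Applying Lemma \ref{l:lip01} once more to the differentiated equation, $\partial_j u^\epsilon = u[c^\epsilon, g_j^\epsilon] \to u[c, g_j] = \bar u_j$ uniformly. Uniform convergence of both $u^\epsilon \to u$ and $\partial_j u^\epsilon \to \bar u_j$ forces $\partial_j u = \bar u_j$.

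Combining the two steps, $\partial_j u = \bar u_j \in C^{1+\delta/2, 2+\delta}$ for every $j$, so $u$ has continuous third spatial derivatives and a continuous time derivative, i.e.\ $u \in C^{1,3}$; uniqueness of $\bar u_j$ as the solution of \eqref{eq:ppde02} is already part of Lemma \ref{l:ppde01}. The main obstacle lies in the identification step: the sensitivity estimate of Lemma \ref{l:lip01} only provides $C^0$ convergence, so one must verify that $C^0$ convergence of both $u^\epsilon$ and $\partial_j u^\epsilon$ is enough to transfer differentiability to the limit (a standard fact once both limits are continuous), and that the mollified coefficients retain the $C^{\delta, 2}$ structure uniformly in $\epsilon$ so that the constants $\Psi(\cdot)$ appearing in the repeated applications of Lemmas \ref{l:ppde01} and \ref{l:lip01} remain $\epsilon$-uniform.
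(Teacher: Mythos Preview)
Your argument is correct, but it follows a genuinely different route from the paper's. The paper does not mollify. Instead it exploits the weak formulation directly: since $u$ is already a classical (hence weak) solution, the variational identity of Definition~\ref{d:wkpde} holds for every test function in $H^2(\mathbb T^d)$; replacing the test function $\phi$ by $\partial_j\phi$ and integrating by parts shows that $\partial_j u$ itself satisfies the variational form of \eqref{eq:ppde02} for all $\phi\in H^2$, hence (by density of $H^2$ in $H^1$ and Lemma~\ref{l:ppde03}) is the unique weak solution. The regularity $\nabla c,\nabla f\in C^{\delta,1}$ then forces, via Lemma~\ref{l:ppde01}, this weak solution to coincide with the classical one, giving $u\in C^{1,3}$.

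The trade-off is this. The paper's route is shorter and self-contained: one integration by parts plus the weak-uniqueness Lemma~\ref{l:ppde03} does the identification in a single stroke, with no approximation layer and no need to track $\epsilon$-uniform constants. Your route instead leverages the sensitivity estimate of Lemma~\ref{l:lip01}, which the paper has developed anyway for later use; it is more hands-on and makes the dependence on the data very explicit, at the cost of the mollification bookkeeping you flag at the end (uniform $C^{\delta,2}$ bounds on $c^\epsilon,f^\epsilon$, uniform convergence of first spatial derivatives, and checking that $g_j=-u\,\partial_j c+\partial_j f$ has enough regularity for Lemma~\ref{l:lip01} to apply on the limit side). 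None of these are obstacles, but the paper's weak-formulation argument simply sidesteps them.
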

\begin{proof}
By Lemma \ref{l:ppde03}, $u$ satisfies, for any $\phi\in H^2(\mathbb T^d)$, 
$$
\int_{\mathbb T^d} \phi (- \partial_t u  - c u + f) dx
= \frac 1 2  \int_{\mathbb T^d}  \nabla \phi \cdot \nabla u dx.
$$
Now, if we replace the test function $\phi$ by $\partial_i \phi$ in the above variational form, then
we have
$$
\int_{\mathbb T^d} \partial_i \phi (- \partial_t u  - c u + f) dx
= \frac 1 2  \int_{\mathbb T^d}  \nabla \partial_i \phi \cdot \nabla u dx.$$
Using integration by parts, we can show that $\bar u_j$ solves the variational form of \eqref{eq:ppde02} for any $\phi \in H^2(\mathbb T^d)$. Since $H^2(\mathbb T^d)$ is a dense subset in $H^1(\mathbb T^d)$,  $\bar u_j$ is indeed a unique weak solution of \eqref{eq:ppde02}. 

Lastly, since the  $ \nabla c,\nabla f \in C^{\delta,1}([0, T] \times {\mathbb T^d})$, we conclude that $\bar u_j$ is indeed a classical solution from Lemma \ref{l:ppde01}. This also implies that $u\in C^{1,3}([0, T] \times {\mathbb T^d})$.
\end{proof}

\begin{lemma}
\label{l:reg03}
Let $c, f\in C^{\delta,2}([0, T]\times \mathbb T^d)$. 
Then the solution $u$ of \eqref{eq:ppde01} belongs to 
$C^{1, 3} ([0, T]\times \mathbb T^d)$ with 
$$|u[c,f]|_{0,2} \le \Psi(|c|_{0,2}+ |f|_{0,2}).$$
Furthermore, the solution map $u = u[c, f]$ of \eqref{eq:ppde01} satisfies
$$| u[ c_1, f_1]  -  u[ c_2, f_2]  |_{0, 1} 
\le \Psi(K)  ( |c_1- c_2|_{0,1} + |f_1-f_2|_{0,1})
$$ 
for  
$$K := |c_1|_{0,1}+ |c_2|_{0,1}+ |f_1|_{0,1}+ |f_2|_{0,1}.$$ 
\end{lemma}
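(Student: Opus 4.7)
The plan is to bootstrap Lemma \ref{l:lip01} one derivative higher, using Lemma \ref{l:ubar01} to identify $\bar u_j := \partial_j u$ as the solution of the parabolic equation \eqref{eq:ppde02}. Since \eqref{eq:ppde02} has the same structural form as \eqref{eq:ppde01}, with coefficient $c$ and source $F_j := -u\,\partial_j c + \partial_j f$, the $C^{0,2}$-bound and $C^{0,1}$-sensitivity for $u$ will follow from the $C^{0,1}$-bound and $C^0$-sensitivity for $\bar u_j$ that Lemma \ref{l:lip01} already supplies, applied now to \eqref{eq:ppde02}.

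For the a priori bound, first apply Lemma \ref{l:lip01} directly to \eqref{eq:ppde01} to obtain $|u|_{0,1}\le \Psi(|c|_{0,1}+|f|_{0,1})$. Since $c,f\in C^{\delta,2}$, the source $F_j$ lies in $C^{\delta,1}$, and by the Leibniz rule in the $C^{0,1}$ seminorm one has $|u\,\partial_j c|_{0,1}\le 2|u|_{0,1}|c|_{0,2}$, so that $|F_j|_{0,1}\le \Psi(|c|_{0,2}+|f|_{0,2})$. Applying Lemma \ref{l:lip01} to \eqref{eq:ppde02} then yields $|\bar u_j|_{0,1}\le \Psi(|c|_{0,1}+|F_j|_{0,1})\le \Psi(|c|_{0,2}+|f|_{0,2})$, and combining over $j$ delivers $|u|_{0,2}\le \Psi(|c|_{0,2}+|f|_{0,2})$.

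For the sensitivity estimate, set $u_i := u[c_i,f_i]$ and $\bar u_j^i := \partial_j u_i$, so that $\bar u_j^i$ solves \eqref{eq:ppde02} with data $(c_i,F_j^i)$ where $F_j^i := -u_i\,\partial_j c_i + \partial_j f_i$. The sensitivity part of Lemma \ref{l:lip01} applied to these two PDEs gives
$$|\bar u_j^1-\bar u_j^2|_0 \le \Psi(K)\bigl(|c_1-c_2|_0 + |F_j^1-F_j^2|_0\bigr),$$
where the $|F_j^i|_0$ contributions have been absorbed into $\Psi(K)$ via the a priori bounds. Now decompose
$$F_j^1-F_j^2 = -u_1(\partial_j c_1-\partial_j c_2) - (u_1-u_2)\partial_j c_2 + \partial_j(f_1-f_2),$$
so that $|F_j^1-F_j^2|_0 \le |u_1|_0\,|c_1-c_2|_{0,1} + |u_1-u_2|_0\,|c_2|_{0,1} + |f_1-f_2|_{0,1}$. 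Using the $C^0$-sensitivity of Lemma \ref{l:lip01} to bound $|u_1-u_2|_0$ by $\Psi(K)(|c_1-c_2|_0 + |f_1-f_2|_0)$ collapses the right-hand side to $\Psi(K)(|c_1-c_2|_{0,1}+|f_1-f_2|_{0,1})$. Combining this with the $C^0$-sensitivity of $u$ itself then yields the desired $C^{0,1}$-sensitivity.

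The main care point is the algebraic bookkeeping in the decomposition of $F_j^1-F_j^2$: adding and subtracting $u_1\,\partial_j c_2$ is the crucial step that factors each term into a coefficient bounded by $\Psi(K)$ times one of the genuine differences $|c_1-c_2|_{0,1}$, $|f_1-f_2|_{0,1}$, or $|u_1-u_2|_0$. No new estimate beyond Lemma \ref{l:lip01} is required; the mild subtlety is confirming that no dependence on $|c_i|_{0,2}$ or $|f_i|_{0,2}$ leaks into the sensitivity constant, which is why the product-rule expansion must be performed at the $C^0$ level (rather than $C^{0,1}$) when estimating $F_j^1-F_j^2$.
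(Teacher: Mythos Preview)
Your proof is correct and follows essentially the same approach as the paper: identify $\bar u_j=\partial_j u$ via Lemma~\ref{l:ubar01} as the solution $u[c,\bar f]$ of \eqref{eq:ppde02} with $\bar f=-u\,\partial_j c+\partial_j f$, then apply Lemma~\ref{l:lip01} to obtain the $C^{0,2}$ bound and the $C^{0,1}$ sensitivity. Your explicit decomposition of $F_j^1-F_j^2$ and the verification that only $|c_i|_{0,1},|f_i|_{0,1}$ enter the sensitivity constant are in fact more detailed than the paper's own argument, which simply says the sensitivity ``similarly concludes the desired result.''
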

\begin{proof}
%Let $u[b, c, f]$ be the solution map of \eqref{eq:ppde01}. 
By Lemma \ref{l:ubar01},  
$\bar u_j = \partial_j u$ is the classical solution of \eqref{eq:ppde02}, which satisfies 
$$\bar u_j = u[c, \bar f],$$
where
$$\bar f = - u \partial_j c  + \partial_j f.$$
Applying Lemma \ref{l:lip01}, we have
$|\bar u_j|_{0,1} < \Psi( |c|_{0,1} + |\bar f|_{0,1} )$. 
Note that, 
$|\bar f|_{0,1}$ is controlled by
$|u|_{0,1}+ | \partial_j c|_{0,1} + |\partial_j f|_{0,1}$, which implies that 
$|\bar f|_{0,1} \le \Psi(|c|_{0,2} + |f|_{0,2})$ due to Lemma \ref{l:lip01}.
Hence, we conclude that $|u[c,f]|_{0,2} \le \Psi(|c|_{0,2}+ |f|_{0,2}).$

At last, applying Lemma \ref{l:lip01} on $ u[c, \bar f]$ again, we have 
$$|u[ c_1, \bar f_1]  - u[c_2, \bar f_2]  |_0 \le \Psi(K)  (|c_1- c_2|_0 + |\bar f_1- \bar f_2|_0)
$$
for $K = |c_1|_0 + |\bar f_1|_0 + |c_2|_0 + |\bar f_2|_0$, which  similarly concludes the desired result.
\end{proof}

\subsubsection{Summary on regularity and sensitivity}

Now we may summarize and generalize the results above to a PDE with non-zero initial conditions.
Consider equation

\begin{equation}
\label{eq:ppde04}
\left\{
\begin{array}
{ll}
\partial_t u =  \frac 1 2 \Delta u - c u + f, \hbox{ on } (0, T)\times \mathbb T^d
\\
u(0, x) = \psi(x), \hbox{ on } x\in \mathbb T^d.
\end{array}
\right.
\end{equation}

To proceed, we recall the following notations:
\begin{itemize}
\item
$C^{\delta, n}_{0, n'}$ be the space of all functions $f\in C^{\delta, n}([0,  T]\times \mathbb T^d)$ with the topology induced by 
the norm $|\cdot|_{0, n'}$. 
\item  $C^{1,3}_{0,1}([0, T]\times \mathbb T^d)$ is the space of all $u\in C^{1,3}([0, T]\times \mathbb T^d)$ topologized by $|\cdot|_{0,1}$.
\end{itemize}
For more details, we refer it to Section \ref{s:appendix}.

\begin{theorem}
\label{t:ppde01}
The solution map $u: [c, f, \psi] \mapsto u[c, f, \psi]$ given by \eqref{eq:ppde04} 
is a locally Lipschitz continuous map
$$C^{\delta,2}_{0,1} \times C^{\delta,2}_{0,1} \times C^{4}_3
\mapsto 
C^{1,3}_{0,1}.
$$
\end{theorem}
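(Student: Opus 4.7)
The natural strategy is to reduce \eqref{eq:ppde04} to the zero-initial-condition problem \eqref{eq:ppde01}, for which Lemma \ref{l:reg03} already supplies the desired local Lipschitz estimate. Setting $w(t,x) := u(t,x) - \psi(x)$, a direct substitution shows that $w$ solves
\begin{equation*}
\partial_t w = \tfrac{1}{2}\Delta w - c w + \tilde f, \qquad w(0,\cdot)=0,
\end{equation*}
with $\tilde f := f + \tfrac{1}{2}\Delta\psi - c\psi$. Thus, in the notation of Lemma \ref{l:reg03}, $u[c,f,\psi] = u[c,\tilde f] + \psi$, and because $\psi \in C^{4}(\mathbb T^d)$ (so in particular $\Delta\psi,\psi \in C^{2}$) and $c \in C^{\delta,2}$, the auxiliary source $\tilde f$ again lies in $C^{\delta,2}$. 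Hence Lemma \ref{l:reg03} applies and yields $u[c,\tilde f] \in C^{1,3}$; adding $\psi$, which is $C^{\infty}$ in $t$ and $C^{4}$ in $x$, preserves membership in $C^{1,3}$, so the image indeed lies in $C^{1,3}_{0,1}$.

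For the local Lipschitz estimate, given inputs $(c_i,f_i,\psi_i)$, $i=1,2$, I would split
\begin{equation*}
u[c_1,f_1,\psi_1] - u[c_2,f_2,\psi_2] = \bigl(u[c_1,\tilde f_1] - u[c_2,\tilde f_2]\bigr) + (\psi_1 - \psi_2).
\end{equation*}
The second summand is trivially bounded by $|\psi_1-\psi_2|_{0,1}\le|\psi_1-\psi_2|_3$. For the first, Lemma \ref{l:reg03} gives
\begin{equation*}
|u[c_1,\tilde f_1] - u[c_2,\tilde f_2]|_{0,1} \le \Psi(K')\bigl(|c_1-c_2|_{0,1} + |\tilde f_1 - \tilde f_2|_{0,1}\bigr),
\end{equation*}
so it remains to dominate $|\tilde f_1 - \tilde f_2|_{0,1}$ by the input differences. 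Expanding
\begin{equation*}
\tilde f_1 - \tilde f_2 = (f_1 - f_2) + \tfrac{1}{2}\Delta(\psi_1-\psi_2) - c_1(\psi_1-\psi_2) - (c_1-c_2)\psi_2
\end{equation*}
and applying the standard product estimate $|\varphi_1\varphi_2|_{0,1} \le C|\varphi_1|_{0,1}|\varphi_2|_{0,1}$ together with $|\Delta(\psi_1-\psi_2)|_{0,1} \le |\psi_1-\psi_2|_3$ finishes the bookkeeping, absorbing everything into a single $\Psi$ of $K := |c_1|_{0,1}+|c_2|_{0,1}+|f_1|_{0,1}+|f_2|_{0,1}+|\psi_1|_3+|\psi_2|_3$.

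All the substantive work lies in Lemma \ref{l:reg03}; the only obstacle here is the norm bookkeeping, specifically verifying that the third $x$-derivatives of $\psi$ which appear implicitly through $|\Delta\psi|_{0,1}$ are still controlled by the chosen topology $|\cdot|_3$ on the initial data, and that the reduction does not inflate the required regularity beyond what the three input spaces $C^{\delta,2}_{0,1}$, $C^{\delta,2}_{0,1}$, $C^{4}_{3}$ supply. Once that is checked, the proof reduces to concatenating the estimates already in place.
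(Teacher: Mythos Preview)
Your proposal is correct and follows exactly the paper's approach: subtract off $\psi$ to reduce to the zero-initial-condition problem with modified source $\tilde f = f + \tfrac12\Delta\psi - c\psi$, then invoke Lemma~\ref{l:reg03}. The paper's proof is terser and leaves the norm bookkeeping you carried out as an implicit exercise, but the substance is identical.
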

\begin{proof}
It is enough to show that
$$
\begin{array}
{ll}
| u[c_1, f_1, \psi_1]  -  u[c_2, f_2, \psi_2]  |_{0, 1} 
\le
\\ \hspace{1in} \Psi(K)  ( |c_1- c_2|_{0,1} + |f_1-f_2|_{0,1} + |\psi_1 -\psi_2|_{3})
\end{array}
$$
for $K =  |c_1|_{0,1} + |c_2|_{0,1} + |f_1|_{0,1} + |f_2|_{0,1} + |\psi_1|_3 + |\psi_2|_3$.
Indeed,
setting $ \tilde u (t, x) =  u(t, x) - \psi(x)$, we have 
$$\tilde u = u [c,  f  + \frac 1 2 \Delta \psi - c \psi, 0]$$
for the solution map $u[\cdot, \cdot, \cdot]$ defined via \eqref{eq:ppde04},
and observe that the desired result  is a consequence of Lemma \ref{l:reg03}.
\end{proof}

Note that the local Lipschitz continuity of Theorem \ref{t:ppde01} automatically yields its local boundedness, i.e
\begin{equation}
\label{eq:reg01}
| u[c, f, \psi]  |_{0, 1} 
\le
\Psi ( |c|_{0,1} +  |f|_{0,1} + |\psi|_{3})
\end{equation}
for some positive increasing function $\Psi$.
The following Harnack  type inequality will be useful.
\begin{corollary}
\label{c:har01}
If $f \equiv 0$, $\psi = e^b$ for some  $c, b \in C^{\delta,2} ([0, T] \times {\mathbb T^d})$, then the solution $u$ of \eqref{eq:ppde04} satisfies the inequality 
$$ e^{- (|b|_0 + |c|_0 T)} < u(t, x) <  e^{|b|_0 + |c|_0T}, \ \forall (t, x) \in [0, T] \times {\mathbb T^d}.$$
\end{corollary}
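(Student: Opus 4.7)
The plan is to prove the bounds by constructing two spatially-constant barriers and invoking the parabolic comparison principle, mirroring the technique already used in the proof of Lemma \ref{l:reg01}.

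First I would define the explicit barriers
$$U_\pm(t) := e^{\pm(|b|_0 + |c|_0 t)}, \quad t \in [0,T],$$
which depend only on $t$. With the operator $L = \partial_t - \frac{1}{2}\Delta$ of \eqref{eq:L01}, and using $\Delta U_\pm \equiv 0$, a direct computation gives
$$L U_+ + c U_+ = (|c|_0 + c) U_+ \ge 0, \qquad L U_- + c U_- = (c - |c|_0) U_- \le 0,$$
because $U_\pm > 0$ and $|c(t,x)| \le |c|_0$. Thus $U_+$ is a supersolution and $U_-$ a subsolution of $L u + c u = 0$, which is precisely the equation satisfied by $u$ since $f \equiv 0$.

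Next, the initial values of the barriers straddle the initial data of $u$: $U_-(0) = e^{-|b|_0} \le e^{b(x)} = u(0,x) \le e^{|b|_0} = U_+(0)$ for every $x \in \mathbb T^d$. Applying the parabolic comparison principle yields $U_-(t) \le u(t,x) \le U_+(t)$ on $[0,T] \times \mathbb T^d$, and since $U_+(t) \le e^{|b|_0 + |c|_0 T}$ and $U_-(t) \ge e^{-(|b|_0 + |c|_0 T)}$ for all $t \in [0,T]$, the non-strict two-sided bound follows immediately.

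The strict inequality is the only subtle point. It can be obtained via the strong parabolic maximum principle: either $U_+(0,\cdot) - u(0,\cdot) = e^{|b|_0} - e^{b(\cdot)}$ fails to vanish identically (as happens whenever $b \not\equiv |b|_0$ on $\mathbb T^d$), or the barrier inequality above is strict at some $(t,x)$ where $c(t,x) \ne \pm|c|_0$; in either situation the strong maximum principle upgrades the bound to a strict one throughout the interior of $[0,T] \times \mathbb T^d$, and an analogous argument treats the lower bound. Alternatively, one can read strictness off a Feynman--Kac representation obtained by time-reversal from \eqref{eq:G02} (adapted to the non-zero initial condition $\psi = e^b$), using the full support of the Brownian endpoint on $\mathbb T^d$ to conclude the expectation lies strictly between the deterministic extremes. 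I expect no serious obstacle in the main chain of inequalities; the only care needed is in this final strictness step, which is routine.
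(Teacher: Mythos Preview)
Your barrier-and-comparison argument is correct for the non-strict bounds, but it is not the route the paper takes. The paper's entire proof is the alternative you mention only at the very end: it writes down the Feynman--Kac representation (the analogue of \eqref{eq:G02} with the terminal contribution $\psi(X^{t,x}(T))$ included),
\[
u(T-t,x)=\mathbb{E}\Bigl[\exp\Bigl\{-\int_t^T c(r,X^{t,x}(r))\,dr\Bigr\}\,e^{b(X^{t,x}(T))}\Bigr],
\]
and reads the two-sided bound directly off the pointwise estimate $-|b|_0-|c|_0T\le b(X^{t,x}(T))-\int_t^T c\le |b|_0+|c|_0T$ inside the expectation. Your approach has the merit of staying purely within PDE techniques and of echoing the method of Lemma~\ref{l:reg01}; the paper's approach is shorter here because the probabilistic representation has already been established in Lemma~\ref{l:ppde01} and needs no new construction. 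Neither argument really substantiates the \emph{strict} inequality in full generality (it fails when $b$ and $c$ are both constant), so your caveat on that point is well placed; in the paper's applications only the non-strict positive lower bound is actually used.
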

\begin{proof}
The inequalities follow  from the representation for $v(t, x) = u(T-t, x)$ in the form of
$$
v(t, x) = \mathbb E \Big[ 
\exp\{ - \int_t^T c(r, X^{t,x}(r)) dr \} \psi(X^{t,x}(T))
\Big], $$
where $X$ is given by  \eqref{eq:X01}.
\end{proof}
\subsection{The FPK equation}

We study  the weak solution of 
FPK equation on $[0,T)\times {\mathbb T^d}$:
\begin{equation}
\label{eq:fpk02}
\left\{
\begin{array}
{ll}
\displaystyle
\partial_t \nu(t,  x) = - \text{div}_x ( b (t,  x)  \nu (t, x)) + \frac 1 2 \Delta \nu  (t,  x) \\
\displaystyle
\nu ( 0, x) = m_0( x).
\end{array}
\right.
\end{equation}
We adopt  the conventional notation of
$$\langle m, \psi\rangle := \int_{\mathbb T^d} \psi(x) m(dx)$$
for any $m\in \mathcal P_1(\mathbb T^d)$ and $\psi: \mathbb T^d\mapsto \mathbb R$ whenever it is well defined.
\begin{definition}
$\nu$ is said to be a weak solution of FPK \eqref{eq:fpk02}, if it satisfies, for any $\phi\in C^\infty_c([0, T]\times {\mathbb T^d})$ $$\langle m_0, \phi(0, x) \rangle + \int_0^T \langle \nu_t,  (\partial_t +  \mathcal L) \phi \rangle dt = 0,$$
where 
$$\mathcal L =  b \cdot \nabla + \frac 1 2 \Delta .$$
\end{definition}
We denote the solution map of \eqref{eq:fpk02} by $\nu = \nu[b, m_0]$.
We recall that $C([0, T], \mathcal P_1({\mathbb T^d}))$ is 
the space of all continuous mappings $\nu: [0, T] \mapsto \mathcal P_1({\mathbb T^d})$ with a metric given by
$$dist(\nu_1, \nu_2) = \sup_t d_1(\nu_1(t), \nu_2(t)),$$
where $d_1$ is $1$-Wasserstein metric for $\mathcal P_1$.
\begin{theorem}
\label{t:fpk01}
Let  $m_0 \in \mathcal P_1(\mathbb T^d)$.
Then the solution map $b \mapsto \nu[b, m_0]$ of \eqref{eq:fpk02} is a locally Lipschitz continuous mapping from 
$C ([0,T] \times {\mathbb T^d})$ to $C([0, T], \mathcal P_1({\mathbb T^d}))$. In particular,  
if $|b_1|_{0} + |b_2|_{0} < K$ then 
$$\sup_t d_1(\nu_1(t), \nu_2(t)) \le \Psi(K) |b_1 - b_2|_0.$$
 Moreover, $\nu = \nu[b, m_0]$  satisfies, 
\begin{equation}
\label{eq:nu01}
d_1(\nu(t), \nu(s)) \le (1+ \sqrt T |b|_0 ) |t -s|^{1/2},
\end{equation}
\begin{equation}
\label{eq:nu02}
\sup_t \int_{{\mathbb T^d}} |x| \nu(t, dx) \le
 \int_{{\mathbb T^d}} |x| m_0(dx) + |b|_0 T + \sqrt T.
\end{equation} 
\end{theorem}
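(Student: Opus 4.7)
\emph{Construction via Girsanov.} My plan is to realize $\nu[b, m_0]$ probabilistically. On some $(\Omega, P)$ supporting a standard Brownian motion $W$ and an independent $X_0 \sim m_0$, let $Y_t = X_0 + W_t$, projected onto $\mathbb T^d$ as in~\eqref{eq:X01}. Since $b$ is bounded, Novikov's condition is immediate for
\[
Z_t[b] = \exp\Big(\int_0^t b(s, Y_s) \cdot dW_s - \tfrac12 \int_0^t |b(s, Y_s)|^2\, ds\Big),
\]
so $Z[b]$ is a positive $P$-martingale, and under $dQ_b = Z_T[b]\, dP$ the process $\tilde W^b_t := W_t - \int_0^t b(s, Y_s)\, ds$ is a $Q_b$-Brownian motion. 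Setting $\nu[b, m_0](t) := \mathrm{Law}^{Q_b}(Y_t)$ and applying It\^o's formula to a smooth test function gives the weak form of~\eqref{eq:fpk02}. Uniqueness in the class of measure-valued weak solutions is a standard duality argument: any weak solution is characterized by $\langle \nu(t), \psi\rangle = \langle m_0, \phi^\psi(0)\rangle$, where $\phi^\psi$ solves the backward dual parabolic equation with terminal datum $\psi$---solvable via Lemma~\ref{l:ppde01} after mollifying $b$ and passing to the limit in the variational form of Definition~\ref{d:wkpde}.

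\emph{Time regularity and moment bound.} Both estimates follow directly from the SDE representation under $Q_b$. Since $Y_t - Y_s = \int_s^t b(r, Y_r)\, dr + (\tilde W^b_t - \tilde W^b_s)$, the triangle inequality yields $E^{Q_b}|Y_t - Y_s| \le |b|_0|t-s| + \sqrt{|t-s|}$; using $|t-s|\le T$ and observing that $(Y_t, Y_s)$ is a coupling of $\nu(t)$ and $\nu(s)$ gives~\eqref{eq:nu01}. The same representation, together with $X_0 \sim m_0$ under $Q_b$ (as $Z_0 = 1$), yields $E^{Q_b}|Y_t| \le \int_{\mathbb T^d}|x|\, m_0(dx) + |b|_0 T + \sqrt{T}$, which is~\eqref{eq:nu02}. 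Continuity $\nu \in C([0,T], \mathcal P_1(\mathbb T^d))$ is then immediate from~\eqref{eq:nu01}.

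\emph{Lipschitz dependence and main obstacle.} For the central Lipschitz claim I would invoke Kantorovich-Rubinstein duality: if $\mathrm{Lip}(\phi) \le 1$, centering gives $|\phi|_0 \le \mathrm{diam}(\mathbb T^d)$, and since $\int \phi\, d\nu_i(t) = E[\phi(Y_t) Z_t^i]$,
\[
d_1(\nu_1(t), \nu_2(t)) \le \mathrm{diam}(\mathbb T^d)\, E|Z_t^1 - Z_t^2|.
\]
Writing $D_t := Z_t^1 - Z_t^2$ and applying It\^o's formula gives
\[
D_t = \int_0^t Z_s^1 (b_1 - b_2)(s, Y_s) \cdot dW_s + \int_0^t D_s\, b_2(s, Y_s) \cdot dW_s,
\]
and combining It\^o isometry with the identity $E[(Z_s^i)^2] \le e^{|b_i|_0^2 s}$ (obtained by expressing $(Z^i)^2$ as a Girsanov exponential with doubled integrand times $\exp(\int_0^s |b_i|^2\, dr)$) yields
\[
E[D_t^2] \le 2T |b_1 - b_2|_0^2\, e^{|b_1|_0^2 T} + 2|b_2|_0^2 \int_0^t E[D_s^2]\, ds.
\]
Gronwall's lemma and Cauchy-Schwarz then close the estimate to give $E|D_t| \le \Psi(K) |b_1 - b_2|_0$. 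The main technical obstacle will be precisely this uniform second-moment control of the Girsanov densities: because $b$ is assumed only continuous (not Lipschitz), the alternative route of coupling two SDE solutions driven by the same $W$ and applying Gronwall is unavailable, so the measure-change approach above is essentially forced and the exponential-martingale bookkeeping becomes the crux of the argument.
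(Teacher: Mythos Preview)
Your argument is correct and takes a genuinely different route from the paper. The paper constructs two strong solutions $X_1, X_2$ of the SDE driven by the \emph{same} Brownian motion and runs Gronwall on $\mathbb E|X_1(t)-X_2(t)|$ directly, which is shorter but tacitly uses a Lipschitz bound on $b_2$ in $x$ to split $|b_1(s,X_1)-b_2(s,X_2)|\le |b_1-b_2|_0 + K|X_1-X_2|$; under the stated hypothesis $b\in C([0,T]\times\mathbb T^d)$ alone that step is not available, and in the paper it is justified only because in the downstream application the drift is $\nabla\tilde v$ with $\tilde v\in C^{1,3}$. Your Girsanov route avoids this entirely: by realising both $\nu_1,\nu_2$ as the law of the \emph{same} process $Y$ under two equivalent measures, the Wasserstein distance reduces to an $L^1$ bound on the density difference $Z^1_t-Z^2_t$, and the It\^o-isometry/Gronwall computation you give needs only $|b_i|_0<\infty$. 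So your proof actually matches the hypothesis of the theorem more closely than the paper's, at the cost of the exponential-martingale bookkeeping you flagged; the paper's version is cleaner but relies on regularity supplied later by the HJB analysis.
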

\begin{proof}
If $|b|_{0} <\infty$ and $m_0 \in \mathcal P_1$, then 
$$X(t) = X(0) + \int_0^t b(s, X_s) ds + W(t), \ X(0) \sim m_0$$
has a unique  solution. 
An application of 
It\^{o}'s formula and the definition of the weak solution 
verifies that  $\nu(t) = Law (X(t))$ is the weak solution of \eqref{eq:fpk02}, see \cite{Car13}.
\eqref{eq:nu01} also follows from \cite{Car13}.

Next, \eqref{eq:nu02} follows from
$$\sup_t \mathbb E |X(t)| \le \mathbb E|X(0)| + |b|_0 T + \sqrt T.$$

Let's assume 
$|b_1|_{0} + |b_2|_{0} < K$ and $\nu_1$ and $\nu_2$ are corresponding solutions of \eqref{eq:fpk02}.
We denote by $X_1$ and $X_2$ the solutions of the SDE above. Note that 
$$\begin{array}
{ll}
\mathbb E |X_1(t) - X_2(t)| & \le \mathbb E \int_0^t |b_1(s, X_1(s))  - b_2(s, X_2(s))| ds
\\ \displaystyle
& \le |b_1 - b_2|_0 T + 
K \int_0^t \mathbb E [ |X_1(s) - X_2(s)| ]ds.
\end{array}
$$
So, we can use the Gronwall's inequality to have
$$\mathbb E |X_1(t) - X_2(t)| \le |b_1 - b_2|_0  T e^{KT}.$$
Therefore, we can have local Lipschitz of $b \mapsto \nu [b, m_0]$ from
$$d_1(\nu_1(t), \nu_2(t)) \le \mathbb E |X_1(t) - X_2(t)| \le  |b_1 - b_2|_0 T  e^{KT}.$$
\end{proof}

\section{Existence} \label{s:exist}
We now return to the GMFG scheme.
First observe that, by using the cost of the form  \eqref{eq:cost02}, 
the triple $(v, {\bf a}^*, \mu)$ is the solution of \eqref{eq:gmfg01} 
if and only if
the pair
$(\tilde v := v - b, \mu)$ is the solution of HJB equation
\begin{equation}
\label{eq:gmfg02a1}
\left\{
\begin{array}
{ll}
\displaystyle
\partial_t \tilde v 
- \frac 1 2  |\nabla \tilde  v|^2
+ \frac 1 2 \Delta \tilde v
+ \tilde \ell_1(\mu, g) = 0 \\
\displaystyle
\tilde v(T, \alpha, x) = -b(T, \alpha, x)
\end{array}
\right.
\end{equation}
coupled with
FPK equation
\begin{equation}
\label{eq:gmfg02b1}
\left\{
\begin{array}
{ll}
\displaystyle
\partial_t \mu = 
 \text{div}_x (   \mu \nabla \tilde  v ) + \frac 1 2 \Delta \mu  \\
\displaystyle
\mu ( 0, \alpha, x) = m_0(\alpha, x),
\end{array}
\right.
\end{equation}
where $\tilde \ell_1$ is
\begin{equation}
\label{eq:til-ell}
\tilde \ell_1 (t, \alpha, x) = \ell_1(t, \alpha, x) + (\partial_t b + \frac 1 2 |\nabla b|^2 + \frac 1 2 \Delta b)(t, \alpha, x).
\end{equation}
Next, we outline our approach to the existence as follows. We define an operator 
$$\nu = \Phi(\mu) =  \Phi_2 \circ \Phi_1 (\mu),$$
where
\begin{enumerate}
\item
$\nabla \tilde v = \Phi_1(\mu)$, where $\tilde v$ 
 is the solution of \eqref{eq:gmfg02a} with a given $\mu$:
 \begin{equation}
\label{eq:gmfg02a}
\left\{
\begin{array}
{ll}
\displaystyle
\partial_t \tilde v 
- \frac 1 2  |\nabla \tilde  v|^2
+ \frac 1 2 \Delta \tilde v
+ \tilde \ell_1(\mu, g) = 0 \\
\displaystyle
\tilde v(T, \alpha, x) = -b(T, \alpha, x)
\end{array}
\right.\end{equation}
\item 
$\nu = \Phi_2(\bar v)$ be the function solving \eqref{eq:gmfg02b} with a given $\bar v$:
\begin{equation}
\label{eq:gmfg02b}
\left\{
\begin{array}
{ll}
\displaystyle
\partial_t \nu = 
 \text{div}_x ( \bar v \nu ) + \frac 1 2 \Delta \nu  \\
\displaystyle
\nu ( 0, \alpha, x) = m_0(\alpha, x).
\end{array}
\right.
\end{equation}
\end{enumerate}
The existence of the solution for the GMFG can be accomplished by Schauder's fixed point theorem in an appropriate space to be mentioned below.

%\subsection{The space of two-time processes valued in $\mathcal P_1(\mathbb T^d)$}
To proceed, we recall that $d_1$ is the Wasserstein metric on $\mathcal P_1(\mathbb T^d)$.
We define the space $S^{1/2}$ as the collection of $\mu: [0, T]\times [0, 1] \mapsto \mathcal P_1({\mathbb T^d})$ such that
$$|\mu|_{1/2} = |\mu|_0 + [\mu]_{1/2} <\infty,$$
where
$$|\mu|_0 = \sup_{t, \alpha} \int_{{\mathbb T^d}} |x| \mu(t, \alpha, dx)$$
and
$$[\mu]_{1/2} = \sup_{t_1\neq t_2, \alpha} 
\frac{d_1(\mu(t_1, \alpha), \mu(t_2, \alpha))}{|t_1 - t_2|^{1/2}}.$$
Note that, $S^{1/2}$ is metrizable by
\begin{equation}
\label{eq:rho01}
\rho (\mu_1, \mu_2) = \sup_{t, \alpha} d_1(\mu_1(t, \alpha), \mu_2(t, \alpha)),
\end{equation}
and we denote the space $S^{1/2}$ by $(S^{1/2}, \rho)$ whenever we need to emphasize its underlying metric.
Note that $B_r:= \{\mu\in S^{1/2}: |\mu|_{1/2} \le r\}$ is a closed convex compact subset of $(S^{1/2}, \rho)$ by 
generalized version of Arzelà–Ascoli theorem, see P232 of \cite{Kel17}.

It is often useful by the duality representation of Wasserstein metric to write
\begin{equation}
\label{eq:rho02}
\rho (\mu_1, \mu_2) = \sup_{t, \alpha,  Lip(f) \le 1}  
\int_{\mathbb T^d} f(x) d (\mu_1(t, \alpha) - \mu_2(t, \alpha))(x)
\end{equation}
where $Lip(f)$ is the Lipschitz constant of the function $f$.
Similarly,  if $\mu \in B_r$ and $f\in C^1$, then 
\begin{equation}
\label{eq:mu01}
\begin{array}
{ll}
\displaystyle
\int_{\mathbb T^d} f(y) d(\mu(t_1, \alpha) - \mu(t_2, \alpha))(y) 
&\le 
|\nabla f|_0 d_1(\mu(t_1, \alpha), \mu(t_2, \alpha)) 
\\
&\le
r |\nabla f|_0 |t_1 - t_2|^{1/2}.
\end{array}
\end{equation}

\subsection{Assumptions}
To proceed, we define a space 
$C^{\delta, 0, m}_{0, 0, m'}$ as the collection of all functions in 
$C^{\delta, 0, m}([0, T]\times [0, 1] \times \mathbb T^d, \mathbb R)$ 
equipped with a 
$C^{0, 0, m'}([0, T]\times [0, 1] \times \mathbb T^d, \mathbb R)$ norm.
For instance, if $f\in C^{0.5, 0, 2}_{0, 0, 2}$, then we write its norm as
$$|f|^{0.5, 0, 2}_{0,0,2} = |f|_{0,0,2} 
= |f|_0 + \sum_i |\partial_{x_i} f|_0 + \sum_{ij} |\partial_{x_ix_j} f|_0.
$$
For more details, we refer to Section \ref{s:appendix}.
\begin{assumption}\label{a:gmfg1}
$b: [0, T]\times [0, 1]\times \mathbb T^d \mapsto  \mathbb R^d$, 
$g: [0,1]^2 \mapsto \mathbb R$, and 
$m_0: [0,1]\times \mathbb T^d\mapsto  \mathbb R^d$ are infinitely smooth functions in all variables.
\end{assumption}
We pose the following assumptions for the cost function $\ell_1$. 
Throughout the paper, since $g$ will be a priori given function, we will suppress $g$ by writing 
$$\ell_1 (\mu, g, t, \alpha, x) = \ell_1(\mu, t, \alpha, x)$$
if this does not cause any confusion.
For convenience, we will write
$$\ell_1[\mu](t, \alpha, x) = \ell_1(\mu, t, \alpha, x) = \ell_1 (\mu, g, t, \alpha, x).$$

\begin{assumption}\label{a:gmfg2}
The mapping
$\mu \mapsto \ell_1[\mu]$ is a  bounded and Lipschitz 
continuous mapping from 
$S^{1/2}$ to 
$C^{0.5, 0, 2}_{0, 0, 1}$, that is, for any $\mu\in S^{1/2}$, $\ell_1[\mu]$ belongs to $C^{0.5, 0, 2}$ and 
$$|\ell_1[\mu]|_{0, 0,1} < M, \quad
|\ell_1[\mu_1] - \ell_1[\mu_2]|_{0, 0, 1} 
\le M\rho(\mu_1, \mu_2),$$
for some $M>0$ independent to the choice of $\mu$.
\end{assumption}

We check that the assumptions are  valid for a class of examples given in Lemma \ref{l:ex01}.
\begin{lemma}
\label{l:ex01}
Suppose
$\ell_2\in C^{\infty}({\mathbb T^d} \times \mathbb T^d, \mathbb R)$ and 
$g$ are given smooth enough. 
%$g\in C^{1.0}([0,1]^2, \mathbb R)$. 
%Assumption \ref{a:gmfg1}. 
Then, the cost $\ell_1$ of \eqref{eq:cost1} satisfies Assumption \ref{a:gmfg2}.
\end{lemma}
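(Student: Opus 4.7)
The plan is to check each clause of Assumption \ref{a:gmfg2} directly, moving all regularity onto $\ell_2$ and $g$ (which are smooth and defined on the compact sets $\mathbb T^d\times\mathbb T^d$ and $[0,1]^2$) and using only probability-measure and Wasserstein bounds on $\mu$. Throughout, I will treat the $x$-derivatives and the $t$-Hölder seminorm separately, since only the $|\cdot|_{0,0,1}$ norm needs to be controlled uniformly in $\mu$, while membership in $C^{0.5,0,2}$ is purely qualitative.

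For the $x$-regularity of $\ell_1[\mu]$, I would differentiate under the integral sign. Since $\ell_2\in C^\infty(\mathbb T^d\times\mathbb T^d)$, every $\partial_x^k\ell_2(x,y)$ is bounded on the compact set, so the dominated convergence theorem justifies
\[
\partial_x^k\ell_1[\mu](t,\alpha,x)=\int_0^1\!\!\int_{\mathbb T^d}\partial_x^k\ell_2(x,y)\,\mu(t,\alpha',dy)\,g(\alpha,\alpha')\,d\alpha',
\]
and bounding this pointwise by $|\partial_x^k\ell_2|_0|g|_0$ (using that $\mu(t,\alpha')$ is a probability and $d\alpha'$ has unit mass on $[0,1]$) yields, for $k=0,1,2$,
\[
|\partial_x^k\ell_1[\mu]|_0\le |\partial_x^k\ell_2|_0\,|g|_0.
\]
In particular $|\ell_1[\mu]|_{0,0,1}\le M$ with $M$ independent of $\mu$, which is the boundedness clause.

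For the $t$-Hölder-$1/2$ regularity needed for membership in $C^{0.5,0,2}$, I would use the duality bound \eqref{eq:mu01}: for fixed $(\alpha,x)$ and $t_1,t_2$,
\[
|\ell_1[\mu](t_1,\alpha,x)-\ell_1[\mu](t_2,\alpha,x)|
\le\int_0^1|g(\alpha,\alpha')|\,|\nabla_y\ell_2(x,\cdot)|_0\,d_1(\mu(t_1,\alpha'),\mu(t_2,\alpha'))\,d\alpha',
\]
which is bounded by $|g|_0|\nabla_y\ell_2|_0\,[\mu]_{1/2}|t_1-t_2|^{1/2}$. The same argument with $\partial_x^k\ell_2$ in place of $\ell_2$ handles the $x$-derivatives up to order two, giving $\ell_1[\mu]\in C^{0.5,0,2}$ (the $\alpha$-continuity comes from continuity of $g$ in $\alpha$ and the dominated convergence theorem, using uniform boundedness in $\alpha'$).

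For the Lipschitz clause, the same duality representation is the key: for $k=0,1$,
\[
\partial_x^k(\ell_1[\mu_1]-\ell_1[\mu_2])(t,\alpha,x)=\int_0^1\!\!\int_{\mathbb T^d}\partial_x^k\ell_2(x,y)\,d(\mu_1(t,\alpha')-\mu_2(t,\alpha'))(y)\,g(\alpha,\alpha')\,d\alpha',
\]
and since $y\mapsto\partial_x^k\ell_2(x,y)$ is Lipschitz in $y$ with constant at most $|\partial_y\partial_x^k\ell_2|_0$, the duality form \eqref{eq:rho02} yields
\[
|\partial_x^k(\ell_1[\mu_1]-\ell_1[\mu_2])|_0\le |g|_0\,|\partial_y\partial_x^k\ell_2|_0\,\rho(\mu_1,\mu_2),
\]
summing $k=0,1$ gives the claimed Lipschitz bound with a constant depending only on $|g|_0$ and derivatives of $\ell_2$ up to order two. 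There is no real obstacle here — the only point that requires a moment's care is routing the Wasserstein bound through the Kantorovich–Rubinstein duality twice (once for boundedness of the $t$-seminorm, once for $\mu$-Lipschitzness), and remembering that $M$ in Assumption \ref{a:gmfg2} governs only the $|\cdot|_{0,0,1}$ norm, not the full $C^{0.5,0,2}$ norm.
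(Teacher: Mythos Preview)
Your proposal is correct and follows essentially the same route as the paper's proof: bound $|\partial_x^k\ell_1[\mu]|_0$ by $|\partial_x^k\ell_2|_0|g|_0$ via the probability-measure structure, obtain the $t$-H\"older seminorm from the Kantorovich--Rubinstein duality applied to $y\mapsto\ell_2(x,y)$ (with constant depending on $[\mu]_{1/2}$, hence not uniform in $\mu$), and then get the $\mu$-Lipschitz bound on $|\cdot|_{0,0,1}$ by the same duality applied to $\partial_x^k\ell_2$ for $k=0,1$. You are in fact slightly more careful than the paper in two places: you note that the $t$-H\"older estimate must also be checked for the second $x$-derivatives to land in $C^{0.5,0,2}$ (the paper only writes it out for $\ell_1$ itself), and you explicitly address $\alpha$-continuity via dominated convergence, which the paper leaves implicit.
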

\begin{proof}
Let $d = 1$ for the simplicity.
For $\mu \in S^{1/2}$, we have
$$|\ell_1[\mu]|_0 \le |\ell_2|_0 |g|_0,$$
%$$|\partial_\alpha \ell_1|_0 \le |\ell_2|_0 |\partial_\alpha g|_0.$$
$$|\partial_x \ell_1[\mu]|_0 \le |\partial_x \ell_2|_0 |g|_0,$$
$$|\partial_{xx} \ell_1[\mu]|_0 \le |\partial_{xx} \ell_2|_0 |g|_0,$$
$$\begin{array}
{ll}
|\ell_1(\mu, t_1, \alpha, x) - \ell_1(\mu, t_2, \alpha, x)|
& \displaystyle
\le \int_0^1 \int_{\mathbb T^d} 
|\ell_2(x, y) (\mu(t_1, \alpha', d y) - \mu(t_2, \alpha', d y)) g(\alpha, \alpha') |
d\alpha'
\\ & \displaystyle
\le \int_0^1 |\partial_y \ell_2|_0 
d_1(\mu(t_1, \alpha'), \mu(t_2, \alpha')) g(\alpha, \alpha') d\alpha'
\\ & \displaystyle
\le  |\partial_y \ell_2|_0 |g|_0 |\mu|_{1/2} |t_1 - t_2|^{1/2}.
\end{array}
$$
This implies that $\ell_1[\mu] \in C^{1/2, 0, 2}$ with estimation 
\begin{equation}
\label{eq:estimation11}
|\ell_1[\mu]|_{1/2, 0, 2} \le |\ell_2|_{2,0} |g|_0 (1+ |\mu|_{1/2}).
\end{equation}
Note that \eqref{eq:estimation11} does not give a uniform upper bound 
due to the $\mu$-dependence on the right hand side of the inequality.
Nevertheless, we have a uniform upper bound for the weaker norm $|\cdot|_{0,0,1}$:
$$|\ell_1[\mu]|_{0, 0, 1}  
\le  |\ell_2|_{1,0} |g|_0, 
\ \forall \mu \in S^{1/2}.$$
For $\mu_1, \mu_2 \in S^{1/2}$, we have
$$
\begin{array}
{ll}
\ell_1(\mu_1, t, \alpha, x) - \ell_1(\mu_2, t, \alpha, x)
& \displaystyle
= \int_0^1 \int_{\mathbb T^d} \ell_2(x, y) (\mu_1(t, \alpha', d y) - \mu_2(t, \alpha', d y)) 
g(\alpha, \alpha') d\alpha'
\\ & \displaystyle
\le |\partial_y \ell_2|_0 d_1(\mu_1(t, \alpha), \mu_2(t, \alpha)) |g|_0.
\end{array}
$$
This implies that
$$|\ell_1[\mu_1] - \ell_1[\mu_2]|_0 \le |\partial_y \ell_2|_0 |g|_0  \rho(\mu_1, \mu_2).$$
Similarly, we obtain
$$|\partial_x\ell_1[\mu_1] - \partial_x \ell_1[\mu_2]|_0 \le |\partial_y \partial_x\ell_2|_0 |g|_0  \rho(\mu_1, \mu_2).$$
%$$|\partial_{xx}(\ell_1[\mu_1] - \ell_1[\mu_2])|_0 \le |\partial_y  \partial_{xx}\ell_2|_0 |g|_0  \rho(\mu_1, \mu_2).$$
Therefore, we have Lipschitz continuity
$$|\ell_1[\mu_1] - \ell_1[\mu_2]|_{0, 0, 1}  \le | \ell_2|_{1,1} |g|_0 \rho(\mu_1, \mu_2),$$
and this implies Assumption \ref{a:gmfg2} with $M = |\ell_2|_{1,1} |g|_0.$
\end{proof}

\iffalse
In the above, it is difficult to obtain an estimation of $|\ell_1[\mu_1] - \ell_1[\mu_2]|_{1/2, 0, 0} \le M \rho(\mu_1, \mu_2)$ 
for the continuity of $\mu \mapsto \ell_1[\mu]$ 
with the default metric of the range $C^{1/2, 0, 2}$. 
But this is enough for the entire proof.
\fi

\subsection{Operator $\Phi_1$}
Recall that  $ \nabla  \tilde v = \Phi_1(\mu)$, where $\tilde v$
is the solution of \eqref{eq:gmfg02a} with given $\mu$.
By Hopf-Cole transform $\tilde v$ is the solution of \eqref{eq:gmfg02a} if and only if
\begin{equation}
\label{eq:w01}
w = \exp\{-  \tilde v\}
\end{equation}
is the solution of 
\begin{equation}\label{eq:hjb04}
\left\{
\begin{array}
{ll}
\partial_t w + \frac 1 2 \Delta w 
- w \tilde \ell_1[\mu] = 0
&\hbox{ on }  (0, T)\times [0,1]\times {\mathbb T^d} \\
w(T, \alpha, x) = e^{b(T, \alpha, x)} &\hbox{ on }  [0,1]\times {\mathbb T^d}.
\end{array}
\right.
\end{equation}
In addition, we have the following relation by chain rule:
$$\nabla \tilde v = - \frac{ \nabla w}{w}, \ 
\Delta \tilde v =   \frac{ - w \Delta w + |\nabla w|^2}{w^2}.$$
Since $w$-term appears in the denominator,  
Harnack type inequality in Corollary \ref{c:har01} 
ensures that $\nabla \tilde v$ and $\Delta \tilde v$ are well defined.
\subsubsection{Estimates of parameterized PDEs}
We define 
\begin{equation}
\label{eq:G01}
w = G(f)
\end{equation}
by the solution of
\begin{equation}\label{eq:hjb05}
\left\{
\begin{array}
{ll}
\partial_t w + \frac 1 2 \Delta w 
- w f = 0 &\hbox{ on }   (0, T)\times [0,1]\times {\mathbb T^d} \\
w(T, \alpha, x) = e^{b(T, \alpha, x)}  &\hbox{ on }  [0,1]\times {\mathbb T^d}.
\end{array}
\right.
\end{equation}
Note that $w = G(\tilde \ell_1[\mu])$  is the solution of \eqref{eq:hjb04}.
\begin{lemma}
\label{l:ppde05}
The mapping
$G$ is a locally Lipschitz continuous mapping from 
$C^{0.5, 0, 2}_{0, 0, 1}$ to $C^{1, 0, 2}_{0, 0, 1}$.
\end{lemma}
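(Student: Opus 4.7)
The plan is to reduce Lemma \ref{l:ppde05} to Theorem \ref{t:ppde01} by treating $\alpha \in [0,1]$ as an inert parameter and analyzing \eqref{eq:hjb05} slice by slice. First I would time-reverse, setting $\tilde w(t, \alpha, x) = w(T - t, \alpha, x)$ and $\tilde f(t, \alpha, x) = f(T - t, \alpha, x)$; this converts the terminal value problem \eqref{eq:hjb05} into an initial value problem of the form \eqref{eq:ppde04} with zero forcing, potential $c = \tilde f(\cdot, \alpha, \cdot)$, and initial datum $\psi_\alpha(x) = e^{b(T, \alpha, x)}$. By Assumption \ref{a:gmfg1}, the family $\{\psi_\alpha\}_{\alpha \in [0,1]}$ is uniformly bounded in $C^4(\mathbb T^d)$, and for $f \in C^{0.5, 0, 2}$ each fiber $\tilde f(\cdot, \alpha, \cdot)$ belongs to $C^{0.5, 2}([0, T] \times \mathbb T^d)$.

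Next I would invoke Theorem \ref{t:ppde01} fiberwise. For $f_1, f_2$ with $|f_i|_{0, 0, 1} \le K$, that theorem yields
\[
|\tilde w_1(\cdot, \alpha, \cdot) - \tilde w_2(\cdot, \alpha, \cdot)|_{0, 1} \le \Psi(K') \, |f_1(\cdot, \alpha, \cdot) - f_2(\cdot, \alpha, \cdot)|_{0, 1},
\]
where $K' = 2K + \sup_\alpha |e^{b(T, \alpha, \cdot)}|_3$; the forcing and initial-data contributions to the Lipschitz estimate vanish because both coincide across $f_1$ and $f_2$. Bounding the right-hand side by $\Psi(K') |f_1 - f_2|_{0, 0, 1}$ and taking the supremum over $\alpha$ produces the desired local Lipschitz conclusion $|G(f_1) - G(f_2)|_{0, 0, 1} \le \Psi(K') |f_1 - f_2|_{0, 0, 1}$. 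An analogous fiberwise application of the boundedness bound \eqref{eq:reg01} yields $|G(f)|_{0, 0, 1} \le \Psi(|f|_{0, 0, 1} + |b|_3)$.

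It remains to verify that $G(f) \in C^{1, 0, 2}$. Theorem \ref{t:ppde01} applied fiberwise places $\tilde w(\cdot, \alpha, \cdot)$ in $C^{1, 3}([0, T] \times \mathbb T^d)$, which is strictly stronger than the required $C^1$ in $t$ and $C^2$ in $x$. Continuity of $w$ in $\alpha$ in the $|\cdot|_{0, 1}$ topology on $(t, x)$ follows by applying the Lipschitz bound above to $f(\cdot, \alpha_n, \cdot)$ versus $f(\cdot, \alpha, \cdot)$, combined with the continuity of $\alpha \mapsto f(\cdot, \alpha, \cdot)$ into $C^{0, 1}([0, T] \times \mathbb T^d)$ implicit in $f \in C^{0.5, 0, 2}$ via the joint continuity of $f$ and $\partial_x f$ on the compact domain. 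The main (mild) subtlety is bookkeeping: one must confirm that the generic increasing function $\Psi(\cdot)$ in Theorem \ref{t:ppde01} depends on coefficient norms in a way that is uniformly controlled in $\alpha$ by the joint norms $|f|_{0, 0, 1}$ and $|b|_3$, so that the supremum-over-$\alpha$ of the slice-wise estimates produces a genuine estimate in the joint space.
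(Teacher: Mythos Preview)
Your proposal is correct and follows essentially the same strategy as the paper: freeze $\alpha$, apply Theorem~\ref{t:ppde01} fiberwise, and then take the supremum over $\alpha$ using the monotonicity of $\Psi$. The Lipschitz estimate you derive matches the paper's almost line for line.

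The one place you diverge is in establishing continuity of $w$ in $\alpha$. The paper argues pointwise continuity directly from the probabilistic representation \eqref{eq:G02} via dominated convergence; you instead reuse the Lipschitz bound from Theorem~\ref{t:ppde01}, comparing the slices $f(\cdot,\alpha_n,\cdot)$ and $f(\cdot,\alpha,\cdot)$. Your route is arguably cleaner and yields convergence in the $|\cdot|_{0,1}$ topology rather than merely pointwise. One small omission: when you vary $\alpha$ you must also let the initial datum $\psi_\alpha = e^{b(T,\alpha,\cdot)}$ vary, so the full estimate from Theorem~\ref{t:ppde01} reads
\[
|w(\cdot,\alpha_n,\cdot) - w(\cdot,\alpha,\cdot)|_{0,1} \le \Psi(K)\bigl(|f(\cdot,\alpha_n,\cdot) - f(\cdot,\alpha,\cdot)|_{0,1} + |\psi_{\alpha_n} - \psi_\alpha|_3\bigr),
\]
and the second term also tends to zero by the smoothness of $b$ in Assumption~\ref{a:gmfg1}. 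With that addition your argument is complete.
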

\begin{proof}
Let $f\in C^{0.5, 0, 2}$ and $w = G(f)$. 
By Theorem \ref{t:ppde01}, we have $w(\alpha) \in C^{1,3}$.
If $\alpha\to \alpha_0$, then $f(\alpha) \to f(\alpha_0)$ holds pointwisely. Together with Dominated Convergence Theorem on the probabilistic representation of $w$, one can conclude $w(t, \alpha, x) \to w(t, \alpha_0, x)$ whenever $\alpha\to \alpha_0$.
Therefore, $w$ belongs to $C^{1, 0, 3}$.

Given $f_1, f_2 \in C^{0.5, 0, 2}$ and $w_i = G(f_i)$ with 
$$K(\alpha) = |f_1(\alpha)|_{0,1} + |f_2(\alpha)|_{0,1} + |e^{b(T, \alpha)}|_3,$$
we can use local Lipschitz continuity of Theorem \ref{t:ppde01} to obtain local Lipschitz of $G$,
$$
\begin{array}
{ll}
|w_1 - w_2|_{0, 0, 1} &
= \sup_\alpha |w_1(\alpha) - w_2(\alpha)|_{0,1}
\\ &
\le \sup_\alpha \Psi(K(\alpha))  |f_1(\alpha) - f_2(\alpha)|_{0,1} 
\\ & 
\le \Psi(\sup_\alpha K(\alpha)) |f_1 -f_2|_{0, 0, 1}.
\end{array}
$$
In the above, we used the monotonicity of $\Psi(\cdot)$ to switch $\Psi$ and $\sup$. 
Since $\sup_\alpha K(\alpha) \le \Psi(|f_1|_{0,0,1} + |f_2|_{0,0,1} + |b|_3)$, we can rewrite the above estimations as
$$|w_1 - w_2|_{0, 0, 1} \le 
 \Psi(|f_1|_{0,0,1} + |f_2|_{0,0,1} + |b|_3)  |f_1 -f_2|_{0, 0, 1}.
$$

\end{proof}

\subsubsection{$\Phi_1$ estimate}

\begin{lemma}
\label{l:Phi02}
$\Phi_1$ is a uniformly bounded and 
Lipschitz continuous mapping from 
$(S^{1/2}, \rho)$ to $C^{0}([0, T] \times[0, 1] \times {\mathbb T^d}, \mathbb R^d)$.
%Moreover, $|\Phi_1(\mu)|_{1/2} \le \Psi(M)$, where $M$ is the constant given in Assumption \ref{a:gmfg2}.
\end{lemma}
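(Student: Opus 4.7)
The plan is to exploit the Hopf--Cole transformation \eqref{eq:w01}, which reduces $\Phi_1$ to a composition of maps whose sensitivity we already control: the cost map $\mu \mapsto \tilde{\ell}_1[\mu]$, the linear parabolic solution map $G$ of Lemma~\ref{l:ppde05}, and the pointwise nonlinear operation $w \mapsto -\nabla w / w$. Concretely, for each $\mu \in S^{1/2}$ I set $w_\mu := G(\tilde{\ell}_1[\mu])$, so that $\Phi_1(\mu) = -\nabla w_\mu / w_\mu$ pointwise. By Assumption~\ref{a:gmfg2} and the smoothness of $b$ that enters the definition \eqref{eq:til-ell}, the family $\{\tilde{\ell}_1[\mu]\}_{\mu \in S^{1/2}}$ is bounded in $|\cdot|_{0,0,1}$ by a constant $M'$ depending only on $M$ and $b$; crucially, $M'$ is $\mu$-independent.

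The first step is the uniform bound on $|\Phi_1(\mu)|_0$. Reversing time in \eqref{eq:hjb05} and applying Corollary~\ref{c:har01} pointwise in $\alpha$ produces a two-sided estimate
$$0 < \kappa \le w_\mu(t,\alpha,x) \le K,$$
where $\kappa$ and $K$ depend only on $|b|_0$ and $M'$, hence are independent of $\mu$. Feeding the uniform bound on $\tilde{\ell}_1[\mu]$ into Lemma~\ref{l:ppde05} (whose local Lipschitz estimate into $C^{1,0,2}_{0,0,1}$ implies a matching local bound) gives a $\mu$-independent estimate $|w_\mu|_{0,0,1} \le N$. Therefore $|\Phi_1(\mu)|_0 \le N/\kappa$ uniformly in $\mu$.

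For Lipschitz continuity, take $\mu_1,\mu_2 \in S^{1/2}$ and write $w_i := w_{\mu_i}$. The elementary identity
$$-\frac{\nabla w_1}{w_1} + \frac{\nabla w_2}{w_2} = \frac{w_1(\nabla w_2 - \nabla w_1) + (w_1 - w_2)\nabla w_1}{w_1 w_2}$$
combined with the uniform bounds of the previous paragraph yields
$$|\Phi_1(\mu_1) - \Phi_1(\mu_2)|_0 \le \frac{K + N}{\kappa^2}\, |w_1 - w_2|_{0,0,1}.$$
Lemma~\ref{l:ppde05}, applied to the pair $(\tilde{\ell}_1[\mu_1],\tilde{\ell}_1[\mu_2])$ together with the Lipschitz half of Assumption~\ref{a:gmfg2}, then gives
$$|w_1 - w_2|_{0,0,1} \le \Psi(M' + |b|_3)\, |\tilde{\ell}_1[\mu_1] - \tilde{\ell}_1[\mu_2]|_{0,0,1} \le M\,\Psi(M' + |b|_3)\, \rho(\mu_1,\mu_2),$$
which closes the argument.

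The only subtlety I anticipate is verifying that the constants coming out of Lemma~\ref{l:ppde05} and Corollary~\ref{c:har01} remain bounded as $\mu$ ranges over the entire space $S^{1/2}$; those results are stated only as \emph{local} Lipschitz/Harnack estimates. This is precisely where the \emph{uniform} boundedness part of Assumption~\ref{a:gmfg2} (not just the Lipschitz part) is essential, and once that uniformity is recorded the rest is a mechanical chain rule on linear sensitivity estimates, with no real obstacle remaining.
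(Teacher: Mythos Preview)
Your proposal is correct and follows essentially the same approach as the paper: both factor $\Phi_1$ through the Hopf--Cole transform as $\mu \mapsto \tilde\ell_1[\mu] \mapsto w \mapsto -\nabla w/w$, use Assumption~\ref{a:gmfg2} to get a $\mu$-independent bound on $|\tilde\ell_1[\mu]|_{0,0,1}$, invoke Lemma~\ref{l:ppde05} for $|w|_{0,0,1}$ and Corollary~\ref{c:har01} for the lower bound on $w$, and then handle the Lipschitz step via the same algebraic identity for the difference of quotients. Your explicit observation that the uniform boundedness in Assumption~\ref{a:gmfg2} is what converts the local estimates of Lemma~\ref{l:ppde05} into global ones is exactly the point the paper's proof relies on implicitly.
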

\begin{proof}
If $\mu \in S^{1/2}$, then 
 $\ell_1[\mu] \in C^{0.5, 0, 2}$ with $|\ell_1[\mu]|_{0, 0, 1} < M$ by Assumption \ref{a:gmfg2}. 
We recall that 
$$\tilde \ell_1  = \ell_1 + (\partial_t b + \frac 1 2 |\nabla b|^2 + \frac 1 2 \Delta b).$$
Due to the smoothness of $b$ and compactness of its domain,  
we still have $\tilde \ell_1[\mu] \in C^{0.5, 0, 2}$ with $|\tilde \ell_1[\mu]|_{0, 0, 1} < \Psi( M)$.
 Together with local Lipschitz continuity  of $ G(\cdot)$ in Lemma \ref{l:ppde05}, it implies uniform boundedness of 
 $w = G(\tilde \ell_1[\mu])$, i.e. 
 $$|w|_{0,0,1} < \Psi(M).$$
Moreover, Corollary \ref{c:har01} says that the reciprocal of $w = G( \tilde \ell_1[\mu])$ is bounded 
in the sense  $|w^{-1}|_0 < \Psi(| \tilde \ell_1[\mu]|_0)$. Therefore, we have
$$|w|_{0,0,1} + |w^{-1}|_{0} < \Psi(M).$$
Next, we can prove that $\Phi_1$ is uniformly bounded in $C^{0}$:
$$
|\Phi_1(\mu)|_0 = |\nabla \tilde v|_0 =  |w^{-1} \nabla w|_0
\le |w^{-1}|_0 |\nabla w|_0 
\le  |w^{-1}|_0 |w|_{0, 0, 1} \le \Psi(M).
$$
Finally, we can show  the global Lipschitz for $\Phi_1$ by the following estimates:
%on $|\Phi_1(\mu_1) - \Phi_1(\mu_2)|_0$:
$$
\begin{array}
{ll}
|\Phi_1(\mu_1) - \Phi_1(\mu_2)|_0 
 & 
=  |w_1^{-1} \nabla w_1 - w_2^{-1} \nabla w_2|_0
\\  & \displaystyle
=  |\frac{w_2 \nabla w_1 - w_1 \nabla w_2}{w_1w_2}|_0
\\  & \displaystyle
\le \Psi(M) (|w_2|_0 |\nabla w_1 - \nabla w_2|_0  + 
|\nabla w_2|_0 | w_1 - w_2|_0 )
\\  & \displaystyle
\le \Psi(M) |w_1 - w_2|_{0,0,1}
\\  & \displaystyle
\le \Psi(M) |\tilde \ell_1[\mu_1] - \tilde  \ell_1[\mu_2]|_{0,0,1}
\\  & \displaystyle
\le  \Psi(M) \rho(\mu_1, \mu_2).
\end{array}
$$
In the last two steps, we used Lipschitz continuity obtained by
Lemma \ref{l:ppde05} and Assumption \ref{a:gmfg2}.
\end{proof}

\subsection{Operator $\Phi_2$}
Next, we will show the properties associated to $\Phi_2$ mapping from
$C^{0}([0, T] \times[0, 1] \times {\mathbb T^d}, \mathbb R^d)$ to $S^{1/2}$.
 \begin{lemma}
\label{l:Phi03}
$\Phi_2$ is a 
locally Lipschitz continuous mapping from 
$C^{0}([0, T] \times[0, 1] \times {\mathbb T^d}, \mathbb R^d)$ to $(S^{1/2}, \rho)$. 
Moreover, $|\Phi_2(\bar v)|_{1/2} \le \Psi(|\bar v|_0)$ 
for all $\bar v\in C^{0}([0, T] \times[0, 1] \times {\mathbb T^d}, \mathbb R^d)$  for some monotonically increasing positive function $\Psi$.
\end{lemma}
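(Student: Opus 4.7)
The plan is to apply Theorem \ref{t:fpk01} pointwise in the parameter $\alpha$. For each fixed $\alpha \in [0,1]$, equation \eqref{eq:gmfg02b} is exactly the FPK equation \eqref{eq:fpk02} with drift vector field $b(t,x) := -\bar v(t, \alpha, x)$ (note the sign convention) and initial density $m_0(\alpha, \cdot)$. Since $\bar v \in C^0([0,T]\times[0,1]\times\mathbb T^d,\mathbb R^d)$ we have $|b|_0 \le |\bar v|_0 < \infty$, and by Assumption \ref{a:gmfg1} the function $m_0(\alpha,\cdot)$ is smooth, hence $m_0(\alpha,\cdot)\in\mathcal P_1(\mathbb T^d)$. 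Thus Theorem \ref{t:fpk01} produces a unique $\nu(\cdot,\alpha)\in C([0,T],\mathcal P_1(\mathbb T^d))$, defining $\Phi_2(\bar v)(t,\alpha) := \nu(t,\alpha)$ for every $\alpha$.

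For the a priori bound, I would combine \eqref{eq:nu01} and \eqref{eq:nu02}, which apply with the same $\alpha$-independent drift bound $|b|_0 \le |\bar v|_0$. From \eqref{eq:nu02},
$$\sup_{t,\alpha}\int_{\mathbb T^d}|x|\,\nu(t,\alpha,dx) \le \sup_\alpha\int_{\mathbb T^d}|x|\,m_0(\alpha,dx) + |\bar v|_0 T + \sqrt T,$$
where the first term on the right is finite by smoothness of $m_0$ and compactness of $[0,1]\times\mathbb T^d$. Hence $|\nu|_0 \le \Psi(|\bar v|_0)$. From \eqref{eq:nu01} applied at each $\alpha$,
$$[\nu]_{1/2} = \sup_{t_1\neq t_2,\,\alpha}\frac{d_1(\nu(t_1,\alpha),\nu(t_2,\alpha))}{|t_1-t_2|^{1/2}} \le 1 + \sqrt T\,|\bar v|_0.$$
Adding these gives $|\Phi_2(\bar v)|_{1/2} \le \Psi(|\bar v|_0)$ for some monotonically increasing $\Psi$, as required.

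For local Lipschitz continuity, let $K$ satisfy $|\bar v_1|_0 + |\bar v_2|_0 < K$. Applying the local Lipschitz estimate of Theorem \ref{t:fpk01} with $b_i(t,x) = -\bar v_i(t,\alpha,x)$ at each fixed $\alpha$ yields
$$\sup_t d_1(\nu_1(t,\alpha),\nu_2(t,\alpha)) \le \Psi(K)\,|\bar v_1(\cdot,\alpha,\cdot) - \bar v_2(\cdot,\alpha,\cdot)|_0 \le \Psi(K)\,|\bar v_1 - \bar v_2|_0.$$
Since the constant $\Psi(K)$ depends only on $K$ and $T$, not on $\alpha$, taking the supremum over $\alpha$ on the left gives
$$\rho(\Phi_2(\bar v_1),\Phi_2(\bar v_2)) \le \Psi(K)\,|\bar v_1 - \bar v_2|_0,$$
which is the desired local Lipschitz estimate.

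The argument is essentially bookkeeping on the $\alpha$-parameter: the substantive work is already contained in Theorem \ref{t:fpk01}. The only point requiring mild care is ensuring that the constants extracted from \eqref{eq:nu01}, \eqref{eq:nu02}, and the Lipschitz estimate depend solely on $\alpha$-independent quantities (namely $|\bar v|_0$, $T$, and the $\alpha$-uniform first moment of $m_0$), so that the sup over $\alpha \in [0,1]$ preserves the bounds. The smoothness of $m_0$ provided by Assumption \ref{a:gmfg1} guarantees this uniformity.
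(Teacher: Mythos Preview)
Your proposal is correct and follows essentially the same approach as the paper: apply Theorem \ref{t:fpk01} pointwise in $\alpha$, use \eqref{eq:nu02} and \eqref{eq:nu01} to bound $|\nu|_0$ and $[\nu]_{1/2}$ respectively, and then take suprema over $\alpha$ using monotonicity of $\Psi$ for the local Lipschitz estimate. Your observation about the sign convention $b=-\bar v$ and your explicit check that all constants are $\alpha$-independent are slightly more careful than the paper's presentation, but the argument is the same.
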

\begin{proof}

%Next, we will show $\Phi_2$ is locally bounded.
Given $\bar v \in C^{0}([0, T] \times[0, 1] \times {\mathbb T^d}, \mathbb R^d)$ and $\nu = \Phi_2(\bar v)$, 
applying \eqref{eq:nu02} of Theorem \ref{t:fpk01}, it yields that
$$
\begin{array}{ll}
|\nu|_0 = \sup_{t, \alpha} \int_{{\mathbb T^d}} |x| \nu(t, \alpha, dx) 
&\displaystyle 
= \sup_\alpha \sup_t \int_{{\mathbb T^d}} |x| \nu(t, \alpha, dx) 
\\&\displaystyle 
\le \sup_\alpha \Big( \int_{{\mathbb T^d}} |x| m_0(\alpha, dx) + |\bar v(\alpha)|_0 T + \sqrt T\Big)
\\&\displaystyle 
\le \Psi(|\bar v|_0).
\end{array}
$$
%Therefore, $\nu \in B_{\Psi(|\bar v|_0)}$ holds.
Next, we show the following equicontinuity property again by  \eqref{eq:nu01} of Theorem \ref{t:fpk01}:
$$
\begin{array}
{ll}
\sup_{t_1\neq t_2, \alpha} d_1 (\nu(t_1, \alpha), \nu(t_2, \alpha))
&\displaystyle 
\le \sup_\alpha (1 + \sqrt T |\bar v(\alpha)|_0) |t_1 - t_2|^{1/2}
\\&\displaystyle 
\le \Psi(|\bar v|_0) |t_1 - t_2|^{1/2}.
\end{array}
$$
This proves $\nu \in S^{1/2}$ with
$$|\nu|_{1/2} \le \Psi(|\bar v|_0).$$

For the continuity of $\Phi_2$, given $\bar v_1, \bar v_2 \in 
C^{0}([0, T] \times[0, 1] \times {\mathbb T^d}, \mathbb R^d)$, we set $\nu_i = \Phi_2(\bar v_i)$ for $i =1, 2$. 
%This leads to $$|\theta_1|_{0} + | \theta_2|_{0} < 2 |b_1|_0 + 2 K.$$
Then, we use the local Lipschitz continuity in Theorem \ref{t:fpk01} to obtain
local Lipschitz continuity of $\Phi_2$ as follows:
$$
\begin{array}
{ll}
\rho(\nu_1, \nu_2) & \displaystyle 
= \sup_{t, \alpha} d_1 (\nu_1(t, \alpha), \nu_2(t, \alpha)) 
\\ & \displaystyle 
= \sup_\alpha \sup_t d_1 (\nu_1(t, \alpha), \nu_2(t, \alpha)) 
\\ & \displaystyle 
= \sup_\alpha \Psi(|\bar v_1(\alpha)|_0 + |\bar v_2(\alpha)|_0) |\bar v_1(\alpha) - \bar v_2(\alpha)|_0
\\ & \displaystyle 
\le \Psi(|\bar v_1|_0 + |\bar v_2|_0) |\bar v_1 - \bar v_2|_0.
\end{array}
$$

\end{proof}

\subsection{Existence by Schauder's fixed point theorem}
\begin{theorem}
\label{t:existence}
Suppose Assumptions \ref{a:gmfg1} - \ref{a:gmfg2} are valid.
Then there exists a solution of \eqref{eq:gmfg01} in the space
$C^{1, 0, 2} ([0, T]\times [0, 1] \times {\mathbb T^d}, \mathbb R) \times C([0, T]\times [0, 1], \mathcal P_1({\mathbb T^d})) $.
\end{theorem}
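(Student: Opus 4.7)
The plan is to apply Schauder's fixed point theorem to the composition $\Phi = \Phi_2 \circ \Phi_1$, on the closed convex compact set $B_r \subset (S^{1/2}, \rho)$ already identified in the text via the generalized Arzelà–Ascoli theorem. All the hard analytic work has been done in Lemmas \ref{l:Phi02} and \ref{l:Phi03}; what remains is essentially bookkeeping to assemble these pieces, plus a verification step converting the fixed point back into a solution of \eqref{eq:gmfg01}.

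\textbf{Self-map onto $B_r$.} By Assumption \ref{a:gmfg2}, $|\ell_1[\mu]|_{0,0,1} \le M$ uniformly in $\mu \in S^{1/2}$, and hence the same type of bound (with an inflated constant depending on $|b|_3$) holds for $\tilde\ell_1[\mu]$. Lemma \ref{l:Phi02} then gives $|\Phi_1(\mu)|_0 \le \Psi(M)$ for every $\mu \in S^{1/2}$. Feeding this into Lemma \ref{l:Phi03} yields $|\Phi_2(\Phi_1(\mu))|_{1/2} \le \Psi(|\Phi_1(\mu)|_0) \le \Psi(\Psi(M))$. Choose $r := \Psi(\Psi(M))$ so that $\Phi(S^{1/2}) \subseteq B_r$; in particular $\Phi(B_r) \subseteq B_r$.

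\textbf{Continuity of $\Phi$.} Lemma \ref{l:Phi02} gives (global) Lipschitz continuity of $\Phi_1 : (S^{1/2}, \rho) \to C^0$. On $B_r$, the image $\Phi_1(B_r)$ is a bounded subset of $C^0$, and Lemma \ref{l:Phi03} provides the local Lipschitz continuity of $\Phi_2 : C^0 \to (S^{1/2}, \rho)$ needed there. Composing, $\Phi : B_r \to B_r$ is continuous in the metric $\rho$.

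\textbf{Fixed point and verification.} Since $B_r$ is closed, convex, and compact in $(S^{1/2}, \rho)$, Schauder's fixed point theorem produces $\mu^* \in B_r$ with $\Phi(\mu^*) = \mu^*$. Let $w^* = G(\tilde\ell_1[\mu^*])$ and set $\tilde v^* = -\log w^*$; Corollary \ref{c:har01} guarantees $w^*$ is bounded away from zero, so this Hopf–Cole inversion is well defined and $\tilde v^* \in C^{1,0,2}$ by Lemma \ref{l:ppde05}. By construction $\tilde v^*$ solves \eqref{eq:gmfg02a} and $\nabla \tilde v^* = \Phi_1(\mu^*)$. Because $\Phi_2(\Phi_1(\mu^*)) = \mu^*$, the measure $\mu^*$ solves the FPK equation \eqref{eq:gmfg02b} with drift $-\nabla \tilde v^*$. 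Setting $v^* = \tilde v^* + b$ and ${\bf a}^* = -\nabla v^*$ recovers a solution of the original GMFG system \eqref{eq:gmfg01}, by the equivalence recorded at the start of Section \ref{s:exist}.

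\textbf{Expected main obstacle.} The cleanest difficulty is the membership of the density component in $C([0,T]\times[0,1], \mathcal{P}_1(\mathbb{T}^d))$ as stated, since the space $S^{1/2}$ built in the text only enforces $t$-Hölder continuity uniformly in $\alpha$, not joint $(t,\alpha)$-continuity. To upgrade to joint continuity, I would appeal to the probabilistic representation of $\mu^*(t,\alpha)$ as the law of the SDE with drift $-\nabla \tilde v^*(\cdot, \alpha, \cdot)$: continuity of $\alpha \mapsto \nabla \tilde v^*(\cdot,\alpha,\cdot)$ in $C^0$ (which follows from continuity of $\alpha \mapsto \ell_1[\mu^*](\cdot,\alpha,\cdot)$, cf. the argument in Lemma \ref{l:ppde05}) together with the stability estimate from Theorem \ref{t:fpk01} gives $\alpha \mapsto \mu^*(\cdot,\alpha)$ continuous into $C([0,T],\mathcal{P}_1)$, and combined with the uniform $t$-Hölder bound this yields joint continuity in $(t,\alpha)$.
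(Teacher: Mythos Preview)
Your proposal is correct and follows essentially the same approach as the paper: both apply Schauder's fixed point theorem to $\Phi = \Phi_2\circ\Phi_1$ on $B_r$ with $r$ chosen via Lemmas \ref{l:Phi02} and \ref{l:Phi03}. Your verification step and your discussion of joint $(t,\alpha)$-continuity are in fact more careful than the paper's own proof, which simply notes after the argument that the fixed point lies in $C^{1,0,2}\times S^{1/2}$ and leaves the $\alpha$-continuity upgrade implicit.
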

\begin{proof}
It is enough to show that $\Phi_2 \circ \Phi_1$ has a fixed point in $S^{1/2}$. 
Recall that $B_{r}$ is a convex closed and compact subset of $S^{1/2}$. 
For simplicity, we denote by $\hat B_r$  the closed ball of radius $r$ in 
$C^{0}([0, T]\times [0,1]\times {\mathbb T^d}, \mathbb R^d)$.
\begin{enumerate}
\item
By Lemma \ref{l:Phi02}, there exists some positive increasing function $\Psi_1$ independent to $r$, such that the mapping
$$\Phi_1: B_{r} \mapsto \hat B_{\Psi_1(M)}$$
is continuous.
\item By Lemma \ref{l:Phi03}, there exists some positive increasing function $\Psi_2$  such that the mapping 
$$\Phi_2: \hat B_{\Psi_1(M)} \mapsto B_{\Psi_2 \circ \Psi_1 (M)}$$
is continuous.
\end{enumerate}
Now we take 
$$ r = \Psi_2 (\Psi_1(M))$$
and we have
$$\Phi_2 \circ \Phi_1: B_{r} \mapsto B_{r}$$
is a continuous mapping and this yields the existence of a fixed point for $\Phi$ by Schauder's theorem.
\end{proof}

In the above, we have indeed proved the existence in the space $C^{1, 0, 2} ([0, T]\times [0, 1] \times {\mathbb T^d}, \mathbb R) \times S^{1/2}$.

\subsection{Further remarks on the fixed point theorem} \label{s:remarks}
In connection with GMFG, we explain why Theorem  \ref{t:ppde01} establishes 
locally Lipschitz continuity of the solution map $u: [c, f, \psi] \mapsto u[c, f, \psi]$  of \eqref{eq:ppde04} 
in the sense of
\begin{equation}
\label{eq:map01}
 C^{\delta,2}_{0,1} \times C^{\delta,2}_{0,1} \times C^{4}_3
\mapsto 
C^{1,3}_{0,1}
\end{equation}
instead of
\begin{equation}
\label{eq:map02}
 C^{\delta,2} \times C^{\delta,2} \times C^{4}
\mapsto 
C^{1,3}.
\end{equation}

For the illustration purpose, if we freeze $c, \psi$ of the solution map $u$, then local Lipschitz continuity in the sense of
\eqref{eq:map01} implies local boundedness
$$|u|_{0,1} \le \Psi(|f|_{0,1}),
$$
while local Lipschitz continuity in the sense of
\eqref{eq:map02} implies local boundedness
$$|u|_{0,1} \le |u|_{1,3} \le \Psi(|f|_{\delta,2}).
$$
The main difference of these two local boundedness properties is that, the first one controls $u$ by $f$ with $0$-norm in $t$-variable while the second one does by $f$ with $\delta$-norm in $t$-variable, which is not desirable. 
%Our HJB estimation $|u|_{0,1}$ is sufficient for the subsequent analysis on FPK since $\nabla u$ is the only coupling term in effect.
The main reason is that
the running cost $|\ell_1[\mu]|_{1/2, 0, 1} \le \Psi(|\mu|_{1/2})$ of \eqref{eq:estimation11} does not have uniform bound in $\mu$, while
$|\ell_1[\mu]|_{0, 0, 1}$ does.
For this reason, we included the regularity results for parabolic PDE solutions 
by dropping $t$-regularity while increasing $x$-regularity as a tradeoff.

Recall that, we have established the existence of a fixed point of a mapping 
$\Phi = \Phi_2 \circ \Phi_1$ for $\Phi_1: \mu \mapsto \nabla  \tilde v$ and $\Phi_2: \nabla  \tilde v \mapsto \nu$.
Our approach is along the the Schuader's fixed point theorem with estimates
$$\Phi_1: B_{r} \mapsto \hat B_{\Psi_1(M)}, 
\ 
\Phi_2: \hat B_{\Psi_1(M)} \mapsto B_{\Psi_2 \circ {\Psi_1(M)}}.
$$
In the above, it is crucial that the $\Phi_1$ is upper bounded by $\Psi_1(M)$ independent to $r$, 
and this can be inferred from local boundedness of \eqref{eq:map01} together with uniform boundedness of 
$|\ell_1[\mu]|_{0,0,1}$. 

In contrast, if we use local boundedness in the sense of \eqref{eq:map02}, then we have estimations in the form of
$$\Phi_1: B_{r} \mapsto \hat B_{\Psi_1(r)}, 
\ 
\Phi_2: \hat B_{\Psi_1(r)} \mapsto B_{\Psi_2 \circ {\Psi_1(r)}}.
$$
Since  the norm of the running cost $|\ell_1[\mu]|_{1,0,3}$  depends on $\mu$, $\Phi_1$ can not be uniformly bounded.
As a result, 
the choice of  $r = \Psi_1(r)$ is infeasible.

\section{Uniqueness of GMFG}\label{s:unique}
\begin{assumption} \label{a:mono}
There exists some $\alpha \in [0,1]$ satisfying %\footnote{$\alpha \in [0,1]$}
$$%\sup_{\alpha \in [0,1]}
\int_{\mathbb T^d}  ( \ell_1(\mu_1, g, t, \alpha, x) - \ell_1( \mu_2, g, t,  \alpha, x)) (\mu_1 - \mu_2)(t, \alpha, dx)>0,
%\langle \ell_1(\mu_1, g, t, \alpha, x) - \ell_1( \mu_2, g, t, \alpha, x), \mu_1 - \mu_2 \rangle >0.
$$
for all $\mu_1 \neq \mu_2 \in C([0, T] \times [0, 1], \mathcal P_{1}(\mathbb{T}^{d}))$ and $t\in [0, T]$.
\end{assumption}
%Recall that
%$$\langle f(t, \alpha, x), \mu(t, \alpha, x)\rangle = \int_{\mathbb T^d} f(t, \alpha, x) \mu(t, \alpha, dx).$$

\begin{theorem}  (\cite{Car13}, \cite{Ryz18})
\label{t:uniqueness}
Suppose Assumptions \ref{a:gmfg1} - \ref{a:gmfg2} and \ref{a:mono} are valid.
Then, there exists a unique solution of \eqref{eq:gmfg01} in the space
$C^{1, 0, 2} ([0, T]\times [0, 1] \times {\mathbb T^d}, \mathbb R) \times C([0, T]\times [0, 1], \mathcal P_1({\mathbb T^d})) $.
\end{theorem}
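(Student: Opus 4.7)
The plan is to adapt the classical Lasry--Lions monotonicity argument of \cite{Car13, Ryz18} to the $\alpha$-parameterized GMFG setting. Since existence is already given by Theorem \ref{t:existence}, I only need to show that two solutions must coincide. First I would pass to the reduced system \eqref{eq:gmfg02a1}--\eqref{eq:gmfg02b1} via $\tilde v = v - b$ and take $(\tilde v_1, \mu_1)$ and $(\tilde v_2, \mu_2)$ to be two solutions in the asserted regularity class. Setting $w := \tilde v_1 - \tilde v_2$ and $\eta := \mu_1 - \mu_2$, the common terminal value gives $w(T, \alpha, \cdot) = 0$ and the common initial law gives $\eta(0, \alpha, \cdot) = 0$; note also that the $b$-dependent correction in \eqref{eq:til-ell} is $\mu$-independent, so $\tilde \ell_1[\mu_1] - \tilde \ell_1[\mu_2] = \ell_1[\mu_1] - \ell_1[\mu_2]$.

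Next, for each fixed $\alpha \in [0,1]$, I would compute the time derivative of
$$\int_{\mathbb T^d} w(t, \alpha, x)\, \eta(t, \alpha, x)\, dx$$
using \eqref{eq:gmfg02a1} for $\partial_t w$ and \eqref{eq:gmfg02b1} for $\partial_t \eta$. The two $\Delta$ contributions cancel after integrating by parts twice on the torus; combined with the algebraic identity
$$\mu_1 \nabla \tilde v_1 - \mu_2 \nabla \tilde v_2 \;=\; \tfrac{1}{2}(\mu_1 + \mu_2)\nabla w + \tfrac{1}{2}\eta(\nabla \tilde v_1 + \nabla \tilde v_2),$$
the cross terms produced by $\frac{1}{2}(|\nabla \tilde v_1|^2 - |\nabla \tilde v_2|^2)$ in HJB and by the transport part of FPK cancel against each other, leaving the standard Lasry--Lions identity
$$\frac{d}{dt}\int_{\mathbb T^d} w\,\eta\, dx \;=\; -\frac{1}{2}\int_{\mathbb T^d}(\mu_1 + \mu_2)|\nabla w|^2\, dx \;-\; \int_{\mathbb T^d}(\ell_1[\mu_1] - \ell_1[\mu_2])\,\eta\, dx.$$
Integrating in $t$ over $[0,T]$ and using the vanishing boundary terms yields, for every $\alpha$,
$$\int_0^T\!\!\int_{\mathbb T^d}(\ell_1[\mu_1] - \ell_1[\mu_2])(t, \alpha, x)\,\eta(t, \alpha, x)\, dx\, dt \;=\; -\frac{1}{2}\int_0^T\!\!\int_{\mathbb T^d}(\mu_1+\mu_2)|\nabla w|^2\, dx\, dt \;\le\; 0.$$

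To close the argument, I would apply this per-$\alpha$ identity at the distinguished value of $\alpha$ provided by Assumption \ref{a:mono}. If $\mu_1 \neq \mu_2$, that assumption makes the integrand on the left strictly positive for every $t$, so the left-hand side is strictly positive while the right-hand side is non-positive---a contradiction. Hence $\mu_1 = \mu_2$, and with this common $\mu$ the unique solvability of the linear parabolic problem \eqref{eq:hjb04} for $w = e^{-\tilde v}$ (Hopf--Cole together with Lemma \ref{l:ppde01} and the positivity from Corollary \ref{c:har01}) forces $\tilde v_1 = \tilde v_2$, hence $v_1 = v_2$ and ${\bf a}_1^* = {\bf a}_2^*$.

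The main technical obstacle will be justifying the integrations by parts at the regularity level actually produced in Section \ref{s:exist}, namely $\tilde v \in C^{1,0,2}$ and $\mu \in C([0,T]\times[0,1], \mathcal P_1(\mathbb T^d))$. Since the FPK drift $\nabla \tilde v$ lies in $C^{0,0,1}$, standard parabolic regularity upgrades $\mu(t, \alpha, \cdot)$ to a smooth density for $t > 0$, so the formal computation above is rigorous on $[\varepsilon, T]$ and one then lets $\varepsilon \downarrow 0$ by continuity; alternatively, one can test the weak form of FPK against $w$ and the weak form of HJB against $\eta$ directly. A minor subtlety worth flagging is that Assumption \ref{a:mono} is stated at a single $\alpha$ uniformly in $(\mu_1, \mu_2)$, so the identity need not be integrated in $\alpha$ at all---the graphon enters only implicitly, through the dependence of $\ell_1[\mu]$ on the full field $\mu$.
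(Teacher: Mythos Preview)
Your proposal is correct and follows essentially the same route as the paper: the classical Lasry--Lions monotonicity/duality computation, carried out at a fixed $\alpha$ and then confronted with Assumption~\ref{a:mono} to reach a contradiction. The paper works directly with $v$ (keeping the $\nabla b$ drift) rather than passing to $\tilde v = v - b$, and it presents the cancellation via the two tested equations rather than by differentiating $\int w\,\eta\,dx$ in time, but these are cosmetic differences leading to the same identity $\tfrac{1}{2}\langle \mu_1+\mu_2, |\nabla \bar v|^2\rangle + \langle \ell_1[\mu_1]-\ell_1[\mu_2], \bar\mu\rangle = 0$; your version is in fact more explicit about the time integration, the vanishing boundary terms, and the regularity needed to justify the integrations by parts.
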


\begin{proof}
For $i = 1,2$, let $(v_i, \mu_i)$ be two different solution pairs, and set
$$\bar v = v_1 - v_2, \ \bar \mu = \mu_1 - \mu_2.$$
Note that $\bar v(T, \alpha, x) = \bar \mu(0, \alpha, x) = 0$ for all $(\alpha, x)$ by their given initial-terminal data. 
We also write $\ell_1[\mu_i] = \ell_1[\mu_i, g]$ for short.
Then $\bar v$ satisfies
$$\partial_t \bar v + \nabla b \cdot \nabla \bar v + \frac 1 2 \Delta \bar v 
- \frac 1 2 |\nabla v_1|^2 + \frac 1 2 |\nabla v_2|^2 + \ell_1[\mu_1] - \ell_1[\mu_2] = 0$$
and $\bar \mu$ satisfies
$$- \partial_t \bar \mu - \text{div} (\nabla b \bar \mu)
+ \frac 1 2 \Delta \bar \mu
+ \text{div} (\nabla v_1 \mu_1) - 
\text{div} (\nabla v_2 \mu_2) = 0.$$
The above two equations can be rewritten as
$$\langle
\partial_t \bar v + \nabla b \cdot \nabla \bar v + \frac 1 2 \Delta \bar v, \bar \mu
\rangle
+ 
\langle
- \frac 1 2 |\nabla v_1|^2 + \frac 1 2 |\nabla v_2|^2 + \ell_1[\mu_1] - \ell_1[\mu_2]  , 
\bar \mu \rangle = 0$$
and
$$ \langle
\partial_t \bar v + \nabla b \cdot \nabla \bar v + \frac 1 2 \Delta \bar v, \bar \mu
\rangle
+ \langle 
\bar v, 
\text{div} (\nabla v_1 \mu_1) - 
\text{div} (\nabla v_2 \mu_2)
 \rangle= 0.$$
 By subtracting above two equations, 
and utilizing the identities 
$$
\langle 
\text{div} (\nabla v_1 \mu_1), \bar v
\rangle = 
- \langle |\nabla v_1|^2, \mu_1\rangle + 
\langle \nabla v_1 \cdot \nabla v_2, \mu_1\rangle
$$
and
$$
\langle 
\text{div} (\nabla v_2 \mu_2), \bar v
\rangle = 
\langle |\nabla v_2|^2, \mu_2\rangle -
\langle \nabla v_1 \cdot \nabla v_2, \mu_2
\rangle,
$$
we obtain
$$
\langle \frac 1 2(\mu_1 + \mu_2), |\nabla \bar v|^2\rangle + 
\langle \ell_1[\mu_1] - \ell_1[\mu_2] , 
\bar \mu \rangle = 0.
$$
The first term is non-negative and the second term is strictly positive for some $\alpha \in [0,1]$ by (A3), 
which implies a contradiction.
\end{proof}

%\section{Extensions}
\section{Concluding remarks}\label{s:conclusion}
Our main result of Theorem \ref{t:uniqueness} provides existence and uniqueness of the GMFG equation
under some assumptions. 
One limitation of the current setting is that the running cost in the current setup allows to use Hopf-Cole transformation, which is essential to the subsequent analysis on regularities. To deal with the full generalization of the running cost, one must adopt different approaches and it will be  in our future research direction. It is also desirable to generalize the result to the whole domain $\mathbb R^d$ instead of compact domain $\mathbb T^d$.
Another limitation is that, the current setting requires the continuity of the graphon. 
Note that some graphons are not necessarily continuous. 
Nevertheless, the continuity condition of the graphon can be relaxed in the following sense by similar arguments with additional complexity of notations, which is sketched below briefly.

To proceed, we define $\hat C^{0}$ as the collection of bounded 
measurable functions 
$f: [0, T]\times [0,1] \times \mathbb T^d \mapsto \mathbb R$, and 
we denote its norm as
$$|f|_{0} = \sup_{[0, T]\times [0,1] \times \mathbb T^d} |f(t, \alpha, x)|.$$
With $\hat C^{\delta, 0, 2}$, we denote the set of functions $f \in \hat C^0$ with finite norm
$$|f|_{\delta,0,2} = |f|_0 + \sup_{t_1<t_2, \alpha, x} \frac{|f(t_1, \alpha, x) - f(t_2, \alpha, x)|}{|t_1 -t_2|^\delta} + \sum_{i} |\partial_i f|_0 + \sum_{ij} |\partial_{ij} f|_0.$$
By the above definition $\hat C^{\delta, 0, 2}$ allows the discontinuity in $\alpha$.
\begin{assumption}\label{a:gmfg4}
\begin{enumerate}
\item $b$ and $m_0$ are infinitely smooth in their domains.
\item 
The graphon $g$ is  bounded measurable  on $[0,1]^2$ with $$|g|_0 = \sup_{[0,1]^2} |g(\alpha, \alpha')| < \infty.$$
\end{enumerate}
\end{assumption}

We recall that $B_{r}$ is defined in $S^{1/2}$. 
We use $\hat C^{\delta, 0, 2}_{0, 0, 2}$ to denote the same set $\hat C^{\delta, 0, 2}$ with the norm $|\cdot|_{0,0,2}$, i.e.
$$|f|_{0,0,2} = |f|_0  + \sum_{i} |\partial_i f|_0 + \sum_{ij} |\partial_{ij} f|_0.$$

\begin{assumption}\label{a:gmfg5}
The mapping
$\mu \mapsto \ell_1[\mu]$ is a  bounded and Lipschitz 
continuous mapping from 
$S^{1/2}$ to 
$\hat C^{0.5, 0, 2}_{0, 0, 1}$, that is, for any $\mu\in S^{1/2}$, $\ell_1[\mu]$ belongs to $\hat C^{0.5, 0, 2}$ and 
$$|\ell_1[\mu]|_{0, 0,1} < M, \quad
|\ell_1[\mu_1] - \ell_1[\mu_2]|_{0, 0, 1} 
\le M\rho(\mu_1, \mu_2),$$
for some $M>0$ independent to the choice of $\mu$.
\end{assumption}

We also define $\hat C^{m, 0, n}$ as the collection of $f \in \hat C^0$ with continuous bounded $m$-th derivatives in $t$ and $n$-th derivatives $x$. For instance, for $f\in \hat C^{1,0,2}$, we have finite norm
$$|f|_{1,0,2} = |f|_0 + |\partial_t f|_0 + \sum_{i} |\partial_i f|_0 + \sum_{ij} |\partial_{ij} f|_0.$$
Now we present a result in parallel to Theorem \ref{t:uniqueness}.
The proof is similar and so omitted.
\begin{corollary}
\label{c:uniqueness}
Suppose Assumptions \ref{a:gmfg4} - \ref{a:gmfg5} and \ref{a:mono} are valid.
Then there exists a unique solution of \eqref{eq:gmfg01} in the space
$\hat C^{1, 0, 2} ([0, T]\times [0, 1] \times {\mathbb T^d}, \mathbb R) 
\times \hat C([0, T]\times [0, 1], \mathcal P_1({\mathbb T^d})) $.
\end{corollary}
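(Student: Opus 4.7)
The plan is to repeat the proof of Theorems \ref{t:existence} and \ref{t:uniqueness} with the spaces $C^{\cdot, 0, \cdot}$ replaced by their $\hat C^{\cdot, 0, \cdot}$ analogues that permit merely measurable $\alpha$-dependence. The crucial observation is that the HJB and FPK equations \eqref{eq:gmfg02a}--\eqref{eq:gmfg02b} are $\alpha$-parameterized, so for each fixed $\alpha$ the coefficient $\tilde\ell_1[\mu](\cdot, \alpha, \cdot)$ belongs to $C^{0.5,2}([0,T] \times \mathbb T^d)$ with $|\ell_1[\mu](\cdot,\alpha,\cdot)|_{0,1} \le M$ uniformly in $\alpha$ by Assumption \ref{a:gmfg5}. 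Thus the per-$\alpha$ application of Theorem \ref{t:ppde01}, Corollary \ref{c:har01}, and Theorem \ref{t:fpk01} yields classical-in-$(t,x)$ HJB solutions and weak FPK solutions, with $(t,x)$-bounds depending on $\alpha$ only through $M$.

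First I would redefine the operators so that $\Phi_1: (S^{1/2}, \rho) \to \hat C^0([0,T] \times [0,1] \times \mathbb T^d, \mathbb R^d)$ maps $\mu$ to $\nabla \tilde v$, and $\Phi_2$ goes in the opposite direction. The uniform bounds $|\Phi_1(\mu)|_0 \le \Psi_1(M)$ and $|\Phi_2(\bar v)|_{1/2} \le \Psi_2(|\bar v|_0)$ from Lemmas \ref{l:Phi02} and \ref{l:Phi03} follow by taking $\sup_\alpha$ of the per-$\alpha$ estimates; the Lipschitz continuity of $\Phi_1$ in $\rho$ uses that $\mu \mapsto \ell_1[\mu]$ is Lipschitz into $\hat C^{0.5,0,2}_{0,0,1}$ (Assumption \ref{a:gmfg5}) combined with Lemma \ref{l:ppde05} applied per $\alpha$ and then supremum in $\alpha$. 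Measurability of the outputs in $\alpha$ is the new ingredient: it is obtained from the probabilistic representation \eqref{eq:G02} and the SDE representation of the FPK solution, since each is a Borel functional of measurable data. One then applies Schauder on $B_r \subset S^{1/2}$ with $r = \Psi_2(\Psi_1(M))$; compactness of $B_r$ in $(S^{1/2}, \rho)$ is the same generalized Arzelà-Ascoli fact invoked in Theorem \ref{t:existence}, which uses equicontinuity in $t$ from $[\mu]_{1/2} \le r$ and compactness of $\mathcal P_1(\mathbb T^d)$ in $d_1$, without requiring $\alpha$-continuity.

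For uniqueness I would reproduce the monotonicity argument of Theorem \ref{t:uniqueness} verbatim. Subtracting the weak-form identities for two solution pairs $(v_i, \mu_i)$ and invoking the same two identities on $\text{div}(\nabla v_i \mu_i)$ yields
$$\langle \tfrac 1 2 (\mu_1 + \mu_2), |\nabla \bar v|^2 \rangle + \langle \ell_1[\mu_1] - \ell_1[\mu_2], \bar \mu \rangle = 0,$$
with the pairings interpreted pointwise in $(t,\alpha)$; Assumption \ref{a:mono} then forces a contradiction unless $\mu_1 = \mu_2$, and then $v_1 = v_2$ follows from unique solvability of \eqref{eq:gmfg02a} for a fixed $\mu$.

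The main obstacle is bookkeeping the measurability in $\alpha$ cleanly throughout the chain of sensitivity estimates: the per-$\alpha$ arguments are standard, but the composite $\Phi_2 \circ \Phi_1$ must send measurable inputs to measurable outputs, and the sup-in-$\alpha$ bounds used for Schauder must be attained over a measurable family. The probabilistic representations make this routine by dominated convergence along sequences $\alpha_n \to \alpha$ in a full-measure subset, but this is precisely the ``additional complexity of notations'' referred to by the authors, and is the place where the proof genuinely differs from the continuous-graphon case.
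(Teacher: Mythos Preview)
Your plan mirrors exactly what the paper intends: the paper's own proof is literally ``The proof is similar and so omitted,'' and rerunning Theorems~\ref{t:existence} and~\ref{t:uniqueness} with the $\hat C$ spaces, applying the regularity and sensitivity lemmas per $\alpha$ and then taking $\sup_\alpha$, is that similar proof. The uniqueness half in particular carries over verbatim.

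There is, however, a genuine gap in your existence half. You assert that $B_r$ is compact in $(S^{1/2},\rho)$ ``without requiring $\alpha$-continuity,'' by Arzel\`a--Ascoli using only $t$-equicontinuity and compactness of $\mathcal P_1(\mathbb T^d)$. This is false: $\rho$ is a $\sup_{t,\alpha}$ metric, and Arzel\`a--Ascoli for the sup metric over $[0,T]\times[0,1]$ requires equicontinuity in \emph{both} variables. Concretely, fix $\nu_0\neq\nu_1\in\mathcal P_1(\mathbb T^d)$ and set $\mu_n(t,\alpha)=\nu_{a_n(\alpha)}$ where $a_n(\alpha)$ is the $n$-th binary digit of $\alpha$; each $\mu_n$ is constant in $t$, hence lies in $B_r$, yet $\rho(\mu_n,\mu_m)\ge d_1(\nu_0,\nu_1)>0$ for all $n\neq m$, so no subsequence converges. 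Thus Schauder as you invoke it does not apply once $\alpha$-continuity is dropped. The paper's one-line dismissal hides precisely this point; a complete argument would need either a weaker topology in the $\alpha$-direction (pointwise in $\alpha$, uniform in $t$, together with a Tychonoff-type fixed point theorem and a check that $\Phi$ remains continuous under Assumption~\ref{a:gmfg5}) or an a priori compactness of $\Phi(B_r)$ obtained by other means. Your closing paragraph is right that the $\alpha$-bookkeeping is where the proof genuinely differs, but the obstruction is topological (loss of compactness), not merely measurability tracking.
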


\section{Appendix}\label{s:appendix}

In this appendix, we will summarize the notations of H\"older space used in this paper. 
For this purpose, we will define the following functionals for a function $u$ from a product normed space $S = X\times Y$ to $\mathbb R^d$ whenever it is well defined.
\begin{itemize}
\item
$|u|_0 = \sup_S |u(x,y)|$.
\item For nonnegative integers $l, m$, define
$$|u|_{l, m} = 
\sum_{i=0}^l  \sum_{|\alpha| = i} |D_x^\alpha  u|_0
+ \sum_{i=0}^m  \sum_{|\alpha| = i} |D_y^\alpha u|_0.$$
In the above, $\alpha$ is a multi-index for differential operators. For instance, $|\alpha| = \sum_{i= 1}^{d_1} |\alpha_i|$ for a multi-index 
$\alpha = (\alpha_i : i = 1, \dots, d_1)$.

\item For positive numbers $l', m' \in (0,1)$, define
$$[u]_{l', m'} = [u]_{l', 0} + [u]_{0, m'},$$ where
$$
[u]_{l', 0} = \sup_{x_1 \neq x_2, y} \frac{|u(x_1, y) - u(x_2, y)|}
{|x_1 - x_2|^{l'}}, 
$$
and
$$
[u]_{0, m'} = \sup_{x, y_1 \neq y_2} \frac{|u(x, y_1) - u(x, y_2)|}
{|y_1 - y_2|^{m'}}.
$$

\item For nonnegative integers $l, m$ and positive number $l' \in (0,1)$, define
$$|u|_{l+ l', m} = |u|_{l, m} + 
\sum_{|\alpha| = l} [D_x^\alpha  u]_{l', 0} .$$
\item For nonnegative integers $l, m$ and positive numbers $l', m' \in (0,1)$, define
$$|u|_{l+ l', m+ m'} = |u|_{l, m} + 
\sum_{|\alpha| = l} [D_x^\alpha  u]_{l', m'}
+  \sum_{|\alpha| = m} [D_y^\alpha u]_{l', m'}.$$
\end{itemize}
One can check that the following spaces are Banach spaces:
\begin{itemize}
\item
$C^{l, m} (X\times Y; \mathbb R^d) := 
\{u: |u|_{l, m}<\infty\},$
\item 
$C^{l+ l', m}  (X\times Y; \mathbb R^d)  := \{u:  |u|_{l+ l', m} <\infty\},$
\item 
$C^{l+ l', m+ m'}  (X\times Y; \mathbb R^d)  := \{u:  |u|_{l+ l', m+ m'} <\infty\}.$
\end{itemize}
In this paper, we also involve the space $C^{l', 0, m}$ of functions with a domain  $S = X\times Y\times Z$, whose norm is defined as
$$|u|_{l', 0, m} = |u|_{0, 0, m} + [D_z^m u]_{l', 0, 0}, $$
where
$$|u|_{0, 0, m} = \sum_{i=0}^m \sum_{|\alpha| = i} |D_z^\alpha u|_0, \hbox{ and }
[u]_{l',0,0} = \sup_{x_1 \neq x_2, y, z} \frac{|u(x_1, y, z) - u(x_2, y, z)|}
{|x_1 - x_2|^{l'}}.
$$

In this paper, our functions involve state domain taking values in 
$d$-torus $\mathbb T^d = \mathbb R^d/\mathbb Z^d$.
For $x\in \mathbb R^d$, let $\pi(x)$ be the coset of $\mathbb Z^d$ that contains $x$, i.e. 
$$\pi(x) = x + \mathbb Z^d.$$
A canonical metric on $\mathbb T^d$ can be induced from the Euclidean metric by
$$|\pi(x) - \pi(y)|_{\mathbb T^d} = \inf\{|x - y - z|: z\in \mathbb Z^d\}.$$
For the illustration purpose, we provide  a list of representative H\"older spaces used throughout the paper:
\begin{itemize}
\item
$C^{\delta/2, \delta}([0, T]\times \mathbb T^d)$ is a space of functions $u(t, x)$ with a norm defined by
$$|u|_{\delta/2, \delta} = |u|_0 + [u]_{\delta/2, \delta},$$
where $[u]_{\delta/2, \delta}$ is a seminorm defined by
$$[u]_{\delta/2, \delta} = 
 \sup_{t_1 \neq t_2, x} \frac{|u(t_1, x) - u(t_2, x)|}{|t_1 - t_2|^{\delta/2}}
 + 
 \sup_{t, x_1 \neq x_2} \frac{|u(t, x_1) - u(t, x_2)|}
{|x_1 - x_2|^{\delta}}.$$
This definition may be slightly different from different resources. 
For instance, the definition given by \cite{Kry96} for the seminorm is 
$$
[u]'_{\delta/2, \delta} = \sup_{(t_1, x_1)\neq (t_2, x_2) \in [0, T]\times \mathbb T^d} 
\frac{|u(t_1, x_1) - u(t_2, x_2)|}
{(|t_1-t_2|^{1/2} + |x_1 - x_2|)^{\delta}}.
$$
Indeed, two norms induced by $[u]_{\delta/2, \delta}$ and $[u]'_{\delta/2, \delta}$ are equivalent, 
which can be seen from
below:
$$[u]_{\delta/2, \delta} = [u]_{\delta/2, 0} + [u]_{0, \delta} \le 2 [u]'_{\delta/2, \delta},$$
and
$$
\begin{array}
{ll}
[u]'_{\delta/2, \delta} & \le \sup_{(t_1, x_1)\neq (t_2, x_2)} 
\frac{|u(t_1, x_1) - u(t_2, x_1)| + |u(t_2, x_1) - u(t_2, x_2)|}
{(|t_1-t_2|^{1/2} + |x_1 - x_2|)^{\delta}} 
\\
&\le  \sup_{t_1\neq t_2} 
\frac{|u(t_1, x_1) - u(t_2, x_1)|}
{|t_1-t_2|^{\delta/2} } + 
 \sup_{ x_1 \neq  x_2} 
\frac{|u(t_2, x_1) - u(t_2, x_2)|}
{ |x_1 - x_2|^{\delta}} \\
& \le [u]_{\delta/2, 0} + [u]_{0, \delta} = [u]_{\delta/2, \delta}.
\end{array}
$$

\item $C^{0,1}([0, T]\times \mathbb T^d)$ is a space of functions $u(t, x)$ with a norm
$$|u|_{0,1} = |u|_0 + 
\sum_{i=1...d} |\partial_{x_i} u|_0,$$
and 
$C^{\delta, 1}([0, T]\times \mathbb T^d)$ is a space of functions $u(t, x)$ with a norm
$$|u|_{\delta,1} = |u|_{0,1} + \sum_{i=1...d} [\partial_{x_i} u]_{\delta, 0}.$$

\item 
$C^{1, 2}([0, T]\times \mathbb T^d)$ is a space of functions $u(t, x)$ with a norm
$$|u|_{1,2} = |u|_0 + |\partial_t u|_0 + 
\sum_{i=1...d} |\partial_{x_i} u|_0 +
\sum_{i,j=1...d} |\partial_{x_i x_j} u|_0.$$
\item 
$C^{1+\delta/2, 2+ \delta}([0, T]\times \mathbb T^d)$ is a space with a norm
$$|u|_{1+\delta/2, 2+ \delta} = |u|_{1,2} + [\partial_t u]_{\delta/2, \delta} + 
\sum_{i,j=1...d} [\partial_{x_i x_j} u]_{\delta/2, \delta}.$$
\item 
$C^{0, 2}([0, T]\times \mathbb T^d)$ is a space with a norm
$$|u|_{0,2} = |u|_0 +  
\sum_{i=1...d} |\partial_{x_i} u|_0 +
\sum_{i,j=1...d} |\partial_{x_i x_j} u|_0.$$
$C^{\delta, 2}([0, T]\times \mathbb T^d)$ is a space with a norm
$$|u|_{\delta, 2} = |u|_{0,2} +
\sum_{i,j=1...d} [\partial_{x_i x_j} u]_{\delta, 0}.$$ 
We use $C^{\delta, 2}_{0,2}([0, T]\times \mathbb T^d)$ to denote the space of all functions in $C^{\delta, 2}([0, T]\times \mathbb T^d)$ topologized by the norm $|\cdot|_{0,2}$. 
Such a space $C^{\delta, 2}_{0,2}([0, T]\times \mathbb T^d)$ is not complete. 
However, every $|\cdot|_{\delta, 2}$-norm bounded ball in $C^{\delta, 2}_{0,2}([0, T]\times \mathbb T^d)$ is precompact since 
$C^{\delta, 2}([0, T]\times \mathbb T^d)$ is compactly embedded into $C^{0, 2}([0, T]\times \mathbb T^d)$.

\item $C^{1,3}_{0,1}([0, T]\times \mathbb T^d)$ is the space of all $u\in C^{1,3}([0, T]\times \mathbb T^d)$ topologized by $|\cdot|_{0,1}$, i.e.
$$|u|_{0,1} = |u|_0 + 
\sum_{i=1...d} |\partial_{x_i} u|_0 .$$

\item $C^{0,0,2}([0, T]\times [0,1] \times \mathbb T^d)$ is the space of all $u(t, \alpha, x)$ having finite norm of
$$
|u|_{0,0,2} =  |u|_0 +  
\sum_{i=1...d} |\partial_{x_i} u|_0 +
\sum_{i,j=1...d} |\partial_{x_i x_j} u|_0.
$$
\item $C^{\delta,0,2}([0, T]\times [0,1] \times \mathbb T^d)$ is the space of all $u(t, \alpha, x)$ having finite norm of
$$
|u|_{\delta,0,2} = |u|_{0,0,2} +  \sum_{i,j = 1...d} [\partial_{x_i x_j} u]_{\delta, 0, 0}.
%=  |u|_{0,0,2} +  \sup_{t_1\neq t_2} \frac{|u(t_1, \alpha, x) - u(t_2, \alpha, x)|}{|t_1 - t_2|^\delta}.
$$
\item 
$C^{\delta,0,2}_{0,0,2}([0, T]\times [0,1] \times \mathbb T^d)$ is the space of all $u(t, \alpha, x)$ having finite norm of $|u|_{\delta,0,2}$, but topologized by $|\cdot|_{0,0,2}$.

\end{itemize}

%\cite{FS06}

%%%%%%%%%%%%%%
%\appendix

%\section*{Appendix} 
%\label{sec:a1}

\bibliographystyle{plain}
%\bibliographystyle{plainnat}
%\bibliographystyle{apalike}

%\bibliography{../../refs}
%\end{document}  

\def\cprime{$'$}

\end{document}